\newlength{\defbaselineskip}
\newcommand{\setlinespacing}[1]%
           {\setlength{\baselineskip}{#1 \defbaselineskip}}
\numberwithin{equation}{section}
\newtheorem{thm}{Theorem}[section]
\newtheorem{lem}[thm]{Lemma}
\newtheorem{prop}[thm]{Proposition}
\theoremstyle{definition}
\theoremstyle{remark}
\newtheorem{rem}[thm]{Remark}
\numberwithin{equation}{section}
\begin{document}

\title[Morawetz estimates]
{On Morawetz estimates for the elastic wave equation}

\author{Seongyeon Kim and Ihyeok Seo}

\thanks{  }

\subjclass[2010]{Primary: 35B45; Secondary: 35L05}
\keywords{Morawetz estimates, wave equation}

\address{Department of Mathematics Education, Jeonju University, Jeonju 55069, Republic of Korea}
\email{sy\_kim@jj.ac.kr}

\address{Department of Mathematics, Sungkyunkwan University, Suwon 16419, Republic of Korea}
\email{ihseo@skku.edu}

\begin{abstract}
We establish Morawetz-type estimates for solutions to the elastic wave equation with singular weights of the form $|x|^{-\alpha}$ or $|(x,t)|^{-\alpha}$. 
In particular, we show that space-time weights $|(x,t)|^{-\alpha}$ admit stronger singularities and require weaker regularity assumptions on the initial data compared to purely spatial weights $|x|^{-\alpha}$.
\end{abstract}

\maketitle

\section{Introduction}\label{sec1}
In this paper, we consider the Cauchy problem for the elastic wave equation:
\begin{equation} \label{eq}
\begin{cases}
\partial_t^2 u -\mu\Delta u -(\lambda + \mu)\nabla \mathrm{div}u = 0, \\
u(x,0)=f(x), \quad \partial_t u(x,0)=g(x),
\end{cases}
\end{equation}
where $f, g\colon \mathbb{R}^n \rightarrow \mathbb{R}^n$ and $u\colon \mathbb{R}^n\times \mathbb{R} \rightarrow \mathbb{R}^n$. 
The Lam\'e coefficients $\lambda,\mu\in\mathbb{R}$ are assumed to satisfy the ellipticity condition
\begin{equation} \label{ellipticity}
\mu > 0, \quad \lambda + 2\mu > 0.
\end{equation}
The equation is widely used to model wave propagation in elastic media, where $u$ represents the displacement field of the material (see, e.g., \cite{LL,MH}).

The condition \eqref{ellipticity} ensures the ellipticity of the Lam\'e operator $\Delta^*$, which is defined by
\begin{equation*}
\Delta^*u =\mu\Delta u +(\lambda + \mu)\nabla \mathrm{div}u.
\end{equation*}
Indeed, it acts naturally on vector fields via the \emph{Helmholtz decomposition}, which states that any $f\in [L^2(\mathbb{R}^n)]^n$ can be uniquely written as $f = f_S + f_P$, where $f_S$ is divergence-free and $f_P=\nabla\varphi$ for some $\varphi\in H^1(\mathbb{R}^n)$.\footnote{In fact, $\varphi$ is a solution of the Poisson equation $\Delta\varphi=\mathrm{div} f$. For details, we refer the reader to \cite[pp. 81--83]{Sohr}.}
These components are $L^2$-orthogonal. This decomposition yields
$$ \Delta^* u = \mu \Delta u_S + (\lambda + 2\mu)\Delta u_P,$$
which highlights the ellipticity of $\Delta^*$ under the condition \eqref{ellipticity}.

For the solution $u$ of the elastic wave equation \eqref{eq} with $n\ge3$, it was recently shown in \cite{KKS} that 
\begin{equation}\label{LS0}
\sup_{R>0} \frac{1}{R} \int_{|x|<R} \int_{-\infty}^\infty
|u|^2 dtdx \lesssim \|f\|^2_{L^2}+\|g\|_{\dot H^{-1}}^2
\end{equation}
which can be interpreted as a weighted $L^2$ estimate.
Indeed, if $\rho$  is any function such that $\sum_{j\in\mathbb{Z}}\|\rho\|_{L^\infty(|x|\sim2^j)}^2<\infty$,
one has
$$\|\rho|x|^{-1/2}h\|_{L_{x,t}^2}\lesssim\sup_{R>0} \frac{1}{R} \int_{|x|<R} \int_{-\infty}^\infty|h|^2 dtdx,$$
and then \eqref{LS0} implies a weighted estimate,
\begin{equation*}
\|u\|_{L_{x,t}^2(\rho^2|x|^{-1})} \lesssim \|f\|_{L^{2}}^2
+\|g\|_{\dot H^{-1}}^2.
\end{equation*}
A typical example of such $\rho$ is 
$$\rho=|\log|x||^{-1/2-\varepsilon},\quad \varepsilon>0.$$
Note that this case does not include weights of the form $|x|^{-\alpha}$ for any $\alpha>0$.

The aim of this paper is to establish a weighted $L^2$ estimate for such power-type weights. 
This type of estimate was first introduced by Morawetz \cite{M} and is often referred to as a Morawetz estimate.
Our first result is the following.

\begin{thm}\label{thm}
Let $n\geq2$ and let $u$ be the solution to \eqref{eq}. 
Then we have
\begin{equation}\label{mor}
\|u\|_{L_{x,t}^2(|x|^{-\alpha})} \lesssim \|f\|_{\dot H^{s}}+\|g\|_{\dot H^{s-1}}
\end{equation}
if 
\begin{equation*}
0<s<\frac{n-1}2\quad \text{and}\quad \alpha=1+2s.    
\end{equation*}
\end{thm}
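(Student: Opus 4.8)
The strategy is to reduce the elastic wave equation to two scalar (half-)wave equations via the Helmholtz decomposition, and then to invoke the classical Morawetz-type weighted $L^2$ estimate for the scalar wave propagator $e^{it\sqrt{-\Delta}}$. Writing $f = f_S + f_P$ and $g = g_S + g_P$, the solution splits as $u = u_S + u_P$, where $u_S$ solves $\partial_t^2 u_S = \mu \Delta u_S$ with data $(f_S, g_S)$, and $u_P$ solves $\partial_t^2 u_P = (\lambda+2\mu)\Delta u_P$ with data $(f_P, g_P)$. Since $\mu > 0$ and $\lambda + 2\mu > 0$ by the ellipticity condition \eqref{ellipticity}, both are genuine wave equations with distinct (but harmless) propagation speeds $c_S = \sqrt{\mu}$ and $c_P = \sqrt{\lambda+2\mu}$; a rescaling $x \mapsto c x$ in each piece turns them into the standard wave equation. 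The Helmholtz projections are bounded on every $\dot H^\sigma$, so it suffices to prove the estimate for each scalar piece separately and sum.

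The heart of the matter is therefore the scalar estimate: for $v$ solving the wave equation with data $(v_0, v_1)$,
\begin{equation*}
\| v \|_{L^2_{x,t}(|x|^{-\alpha})} \lesssim \|v_0\|_{\dot H^s} + \|v_1\|_{\dot H^{s-1}}, \qquad \alpha = 1 + 2s, \quad 0 < s < \tfrac{n-1}{2}.
\end{equation*}
Writing $v = \cos(t\sqrt{-\Delta}) v_0 + \frac{\sin(t\sqrt{-\Delta})}{\sqrt{-\Delta}} v_1$ and decomposing $\cos$ and $\sin$ into $e^{\pm it\sqrt{-\Delta}}$, this amounts to bounding $\| e^{it\sqrt{-\Delta}} h \|_{L^2_{x,t}(|x|^{-\alpha})} \lesssim \|h\|_{\dot H^s}$, i.e., after substituting $h = (-\Delta)^{-s/2} \phi$, the smoothing estimate
\begin{equation*}
\big\| |x|^{-(1+2s)/2} e^{it\sqrt{-\Delta}} (-\Delta)^{-s/2} \phi \big\|_{L^2_{x,t}} \lesssim \|\phi\|_{L^2}.
\end{equation*}
This is a known Kato-type smoothing / restriction estimate for the wave equation, provable by a $TT^*$ argument: $TT^*$ reduces it to the boundedness on $L^2_{x,t}$ of the operator with kernel $|x|^{-(1+2s)/2} (-\Delta)^{-s/2} e^{i(t-t')\sqrt{-\Delta}} (-\Delta)^{-s/2} |y|^{-(1+2s)/2}$, which one handles either via the Stein–Weiss inequality combined with the explicit form of the wave resolvent, or via a Fourier-restriction/Strichartz-duality argument on the cone. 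The range $0 < s < (n-1)/2$ is exactly the range in which the relevant weighted inequality (equivalently, the trace lemma on the light cone $|\xi| = \tau$) holds; the endpoints $s = 0$ (corresponding to $\alpha = 1$, the Agmon–Hörmander / Stein–Weiss endpoint recovered in the $L^2_{x,t}$ local-smoothing sense of \eqref{LS0}) and $s = (n-1)/2$ are genuinely excluded.

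I expect the main obstacle to be the scalar weighted smoothing estimate at the level of its full $0 < s < (n-1)/2$ range, rather than the elastic reduction, which is routine. Concretely, the delicate point is controlling the kernel $|x|^{-\alpha/2}(-\Delta)^{-s/2} e^{i(t-t')\sqrt{-\Delta}}(-\Delta)^{-s/2}|y|^{-\alpha/2}$ uniformly; the clean way is to pass to the spatial Fourier side and recognize the $t$-integration as producing a measure supported on the cone, reducing everything to the weighted (Stein–Weiss) bound
\begin{equation*}
\big\| |x|^{-\alpha/2} |\nabla|^{-s} F \big\|_{L^2_x} \lesssim \| F \|_{L^2}
\end{equation*}
restricted to functions with Fourier support on a sphere of radius $\tau$, integrated against $d\tau$ — this is where the precise relation $\alpha = 1 + 2s$ and the dimensional constraint enter. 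An alternative, perhaps cleaner, route is to invoke a known abstract resolvent estimate: the bound $\| |x|^{-\alpha/2}(-\Delta - z)^{-1}|x|^{-\alpha/2}\|_{L^2 \to L^2} \lesssim |z|^{-1+\alpha-1}$ uniform in $z \in \mathbb{C}\setminus[0,\infty)$, valid for $1 \le \alpha < \min(2, (n+1)/2)$ type ranges, combined with the spectral-theorem characterization of smoothing estimates (Kato's theory), which converts the resolvent bound directly into $\| |x|^{-\alpha/2} e^{it\sqrt{-\Delta}} |\nabla|^{-s} \phi\|_{L^2_{x,t}} \lesssim \|\phi\|_{L^2}$. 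Either way, once the scalar estimate is in hand, assembling $u = u_S + u_P$ and using boundedness of the Helmholtz projections on $\dot H^s$ and $\dot H^{s-1}$ finishes the proof.
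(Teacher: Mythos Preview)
Your proposal is correct and follows essentially the same route as the paper: decompose the elastic solution into two classical wave-equation pieces via the Helmholtz projections (the paper phrases this as the Fourier-side orthogonal projections $P=\xi\xi^t/|\xi|^2$ and $Q=I_n-P$, but this is the same operator), use boundedness of these projections on $\dot H^\sigma$, and reduce to the scalar bound $\|e^{it\sqrt{-\Delta}}f\|_{L^2_{x,t}(|x|^{-\alpha})}\lesssim\|f\|_{\dot H^s}$ for $\alpha=1+2s$, $0<s<(n-1)/2$. The only difference is that the paper simply cites this scalar estimate as known (reference \cite{HMSSZ}) rather than re-proving it, whereas you outline a $TT^*$/Kato-smoothing argument for it; either is fine.
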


The region of $(\alpha, s)$ in the theorem corresponds to the open segments $AB$, $AE$, and $AH$ for $n=2$, $n=3$, and $n \ge 4$, respectively (see Figure~\ref{fig1}).
All these segments lie on the same line through points $A$ and $B$.
As $\alpha$ decreases, so does $s$.
This indicates that the weaker the singularity of the weight, 
the less regularity is required of the initial data in order for the estimate \eqref{mor} to hold.

When a space-time weight such as $|(x,t)|^{-\alpha}$ is considered, the singularity at $x = 0$ is effectively mitigated in regions where $t$ stays away from zero.
This suggests that space-time weights may allow for a further relaxation of the regularity assumptions on the initial data,
as the singularity is no longer concentrated purely in space.
This expectation is precisely confirmed by the following result.

\begin{thm}\label{thm2}
Let $n\geq2$ and let $u$ be the solution to \eqref{eq}. 
Then we have
\begin{equation}\label{mor2}
\|u\|_{L^2_{x,t}(|(x,t)|^{-\alpha})} \lesssim \|f\|_{\dot H^{s}}+\|g\|_{\dot H^{s-1}}
\end{equation}
if 
\begin{equation*}
\frac12<s<\frac{n+1}{4}\quad\text{and}\quad
1+2s<\alpha<4s.
\end{equation*}
\end{thm}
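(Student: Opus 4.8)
The plan is to reduce everything, via the Helmholtz decomposition, to two scalar half-wave propagators $e^{it\sqrt{-\mu\Delta}}$ and $e^{it\sqrt{-(\lambda+2\mu)\Delta}}$ acting on $u_S$ and $u_P$ respectively, since the analysis for the full system then follows by summing the two pieces (the $L^2$-orthogonality of the decomposition and boundedness of the Riesz-type projections on the homogeneous Sobolev spaces make this legitimate, exactly as in the derivation of $\Delta^*u = \mu\Delta u_S + (\lambda+2\mu)\Delta u_P$ above). So it suffices to prove, for the scalar solution $v(x,t) = e^{it\sqrt{-\Delta}}h(x)$ (with the other sign handled identically), the estimate
\begin{equation}\label{scalar}
\||(x,t)|^{-\alpha} v\|_{L^2_{x,t}} \lesssim \|h\|_{\dot H^{s-1/2}},
\end{equation}
for $1/2 < s < (n+1)/4$ and $1+2s < \alpha < 4s$; writing $h = |\nabla|^{1/2}f \pm i|\nabla|^{-1/2}g$ recovers the stated norms $\|f\|_{\dot H^s}+\|g\|_{\dot H^{s-1}}$ on the right of \eqref{mor2}.

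The main idea for \eqref{scalar} is that the space-time weight $|(x,t)|^{-\alpha}$ is adapted to the parabolic/cone scaling of the half-wave propagator, which is why it tolerates a genuinely stronger singularity than a purely spatial weight. First I would dualize: \eqref{scalar} is equivalent to the bound $\||\nabla|^{s-1/2}\,|(x,t)|^{-\alpha}F\|_{\dot H^{0}\to} \ldots$ — more precisely, testing against $F$ and using the $TT^*$ identity, \eqref{scalar} is equivalent to
\begin{equation*}
\Big\| |(x,t)|^{-\alpha} \int e^{i(t-s)\sqrt{-\Delta}} |\nabla|^{1-2s} \big(|(x,s)|^{-\alpha} F(\cdot,s)\big)\, ds \Big\|_{L^2_{x,t}} \lesssim \|F\|_{L^2_{x,t}},
\end{equation*}
which is a weighted estimate for the kernel $|(x,t)|^{-\alpha}\,|(y,s)|^{-\alpha}$ composed with the operator $|\nabla|^{1-2s}e^{i(t-s)\sqrt{-\Delta}}$. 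The kernel of the latter is, up to constants, a homogeneous distribution of degree $-(n+1-2s)$ in $(x-y,t-s)$ supported near the light cone; combining this with the homogeneity degrees $-\alpha$ of the two weights, the whole bilinear form is scaling-invariant precisely when $2\alpha + (n+1-2s) = 2(n+1)$, i.e. $\alpha = (n+1+2s)/2$, and it is of the Stein–Weiss / fractional-integration type off that critical line. This is where the constraints come from: the $L^2$ boundedness of such weighted fractional integrals holds in the range $1+2s<\alpha<4s$, with the lower bound $\alpha>1+2s$ coming from local integrability of the weight against the cone kernel near the diagonal and the upper bound $\alpha<4s$ from the decay needed at spatial infinity (equivalently, from the Hardy–Rellich-type inequality $\||x|^{-\alpha}w\|_{L^2}\lesssim \||\nabla|^{\alpha-?}w\|$ that one is allowed to use).

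Concretely I would run a Littlewood–Paley / dyadic decomposition. Decompose $v = \sum_{k} v_k$ where $v_k = e^{it\sqrt{-\Delta}}P_k h$ is frequency-localized at $|\xi|\sim 2^k$. For each $k$, rescale $(x,t)\mapsto(2^{-k}x,2^{-k}t)$ so that the frequency is $\sim 1$; the weight $|(x,t)|^{-\alpha}$ picks up a factor $2^{-k\alpha}$ and the Sobolev norm picks up $2^{k(s-1/2)}\cdot 2^{kn/2}$-type factors, and one checks the exponents balance on the critical line. The unit-frequency piece is handled by the fixed-time decay estimate $\|e^{it\sqrt{-\Delta}}P_0\|_{L^1\to L^\infty}\lesssim (1+|t|)^{-(n-1)/2}$ combined with stationary phase / the explicit form of the half-wave kernel near the cone, which localizes the interaction to the region $||x-y|-|t-s||\lesssim 1$; plugging this into the weighted bilinear form and using Schur's test (or a direct fractional-integration estimate in the $(n+1)$-dimensional variable) with the two power weights gives the unit-frequency bound with a constant uniform in $k$ after undoing the scaling, provided $\alpha$ lies strictly between $1+2s$ and $4s$. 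Finally I would sum over $k\in\mathbb Z$: because the estimate is scale-invariant the individual pieces do not decay in $k$, so I need an almost-orthogonality argument — the operators $h\mapsto |(x,t)|^{-\alpha}v_k$ have ranges that are essentially orthogonal once we also dyadically decompose the physical variable $|(x,t)|\sim 2^j$ and observe that on the annulus $2^j$ only the frequencies $k\gtrsim -j$ (those whose cone reaches that far within the allowed time) contribute nontrivially, with geometric decay away from $k\sim -j$; this geometric gain, which is exactly what the strict inequalities $1+2s<\alpha<4s$ buy us, closes the summation.

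I expect the summation over dyadic scales to be the main obstacle: on the critical line the single-scale estimate has no room to spare, so the whole theorem hinges on extracting a quantitative orthogonality/decay gain between frequency scale $2^k$ and spatial scale $2^j$, and on checking that the two endpoint conditions on $\alpha$ are exactly the ones that make that gain strictly positive. A secondary technical point is the behavior of the half-wave kernel $|\nabla|^{1-2s}e^{it\sqrt{-\Delta}}$ right on the light cone for the relevant range of $s$ (for $1/2<s<(n+1)/4$ the exponent $1-2s$ is negative but the kernel still has a mild singularity there), which must be controlled by the weights rather than absorbed — this is the reason a purely spatial weight would not suffice and the space-time weight does.
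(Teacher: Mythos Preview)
Your high-level strategy --- reduce to the scalar half-wave propagator, run a $TT^*$ argument, and frequency-localize via Littlewood--Paley --- matches the paper's. The genuine gap is in how you propose to sum over the dyadic frequencies $k$.

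Your claim that ``on the annulus $|(x,t)|\sim 2^j$ only the frequencies $k\gtrsim -j$ contribute nontrivially, with geometric decay away from $k\sim -j$'' is not correct: the half-wave group has speed one at every frequency, so a packet $e^{it\sqrt{-\Delta}}P_k f$ reaches the annulus $|(x,t)|\sim 2^j$ regardless of $k$. There is no physical-space mechanism singling out $k\sim -j$. Moreover, the Cotlar--Stein route is blocked because one of the two compositions you would need, namely $T_k^*T_{k'}$, involves multiplication by the weight $|(x,t)|^{-\alpha}$ between the two frequency projections, and this multiplication spreads frequencies, so $T_k^*T_{k'}$ does not vanish for $|k-k'|$ large. (The other composition $T_kT_{k'}^*$ does vanish, but Cotlar--Stein needs both.) The paper handles this step differently: it observes that $|(x,t)|^{-\alpha}\in A_2(\mathbb{R}^{n+1})$ for $|\alpha|<n+1$, and that for almost every fixed $t$ the slice $x\mapsto|(x,t)|^{-\alpha}$ lies in $A_2(\mathbb{R}^n)$ with a uniform constant; then the weighted Littlewood--Paley inequality (Kurtz's theorem) applied in $x$ for each $t$ gives
\[
\big\|e^{it\sqrt{-\Delta}}f\big\|_{L^2_{x,t}(|(x,t)|^{-\alpha})}^2
\lesssim \sum_{k}\big\|e^{it\sqrt{-\Delta}}P_k f\big\|_{L^2_{x,t}(|(x,t)|^{-\alpha})}^2
\]
directly, with no almost-orthogonality argument needed. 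This is the missing idea.

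A smaller point: your single-frequency step via a direct Schur test is plausible but you have not shown it yields the full range $1+2s<\alpha<4s$. The paper instead dyadically decomposes $|t-s|\sim 2^j$, proves for each piece $B_j$ both an unweighted bound $|B_j|\lesssim 2^j\|F\|_2\|G\|_2$ and a heavily weighted bound $|B_j|\lesssim 2^{(n+1)k/2}2^{j((n+3)/2-\alpha p)}\|F\|_{L^2(|(x,t)|^{\alpha p})}\|G\|_{L^2(|(x,t)|^{\alpha p})}$ (the latter from stationary phase plus a spatial cube decomposition), and then bilinearly interpolates. It is this interpolation, not a single kernel estimate, that produces the open interval of admissible $\alpha$. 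Finally, your scalar reduction has a stray half-derivative: the correct target is $\|e^{it\sqrt{-\Delta}}h\|_{L^2_{x,t}(|(x,t)|^{-\alpha})}\lesssim\|h\|_{\dot H^{s}}$, not $\dot H^{s-1/2}$.
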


\begin{rem}\label{rem}
Compared to the open segment on which \eqref{mor} holds,
the region of $(\alpha, s)$ where \eqref{mor2} is valid corresponds to the triangles $BCD$, 
$BEF$, and $BGI$ for $n = 2$, $n = 3$, and $n \ge 4$, respectively (see Figure~\ref{fig1}).
The estimate \eqref{mor2} permits a larger range of $\alpha$ than the estimate \eqref{mor}, indicating that it remains valid even under stronger singularities.
Furthermore, for $n \ge 3$, the region where \eqref{mor2} holds lies below the line through $A$ and $B$ that contains the region where \eqref{mor} is valid.
This shows that the estimate \eqref{mor2} requires less regularity on the initial data than the estimate \eqref{mor}.
\end{rem}

\begin{figure}\label{fig1}
\centering
\includegraphics[width=1.0\textwidth]{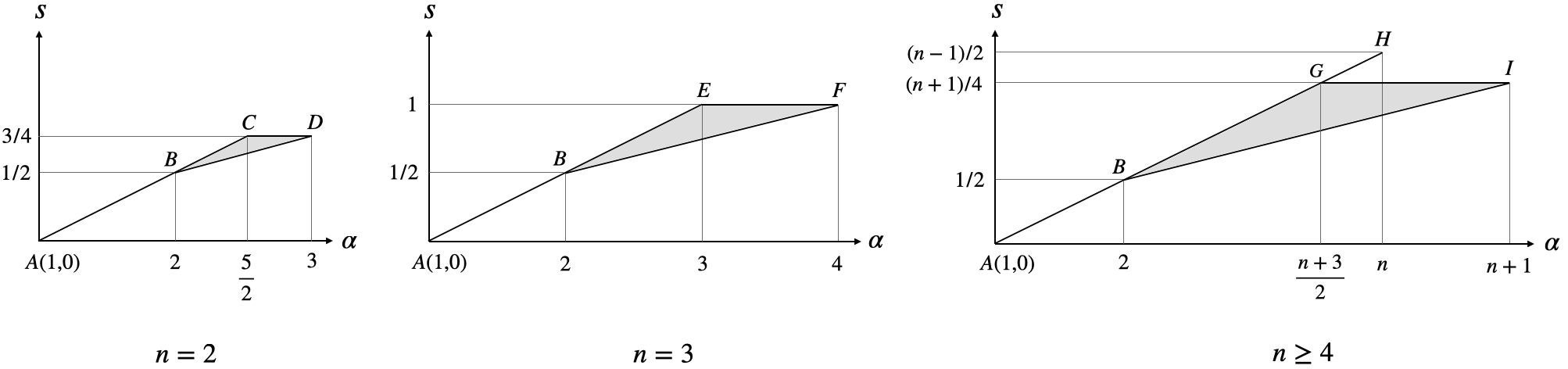}
\caption{The region of $(\alpha,s)$ for \eqref{mor} and \eqref{mor2}.}
\end{figure}

While the estimate \eqref{LS0} in \cite{KKS} relies on the classical Helmholtz decomposition of the initial data, our approach avoids the explicit use of such a decomposition in physical space.
Instead, we decompose the elastic equation in the Fourier domain via orthogonal projections,
which leads to a reduction to classical wave equations.
This enables us to reduce the estimate \eqref{mor} to its counterpart for the classical wave equation, which we then establish.

The proof of \eqref{mor2}, however, is considerably more delicate.  
We employ Littlewood–Paley theory with Muckenhoupt $A_2$ weights to analyze frequency-localized versions of the estimate.  
These localized bounds are derived using a $TT^*$ argument, along with bilinear interpolation applied to the associated bilinear forms.  
In order to obtain the widest possible range of admissible indices $\alpha,s$ for the estimate, we introduce further localizations in both space and time.

It is worth emphasizing that the spacetime weighted estimate proved in this paper fundamentally exploits the dispersive properties of the wave equation. In particular, the estimate on the kernel used in the proof of \eqref{wL2} relies on the fact that the Hessian of the phase of the flat wave propagator is nondegenerate in $n-1$ directions, reflecting the curvature of the characteristic variety. This mechanism yields improved integrability of solutions (as manifested, for example, in Strichartz-type estimates). The same is true for the weighted estimates.
By contrast, for transport equations, even under a uniform transversality assumption between the characteristic flow and a given hypersurface, such dispersive gain is generally difficult to expect. Note that, in the one-dimensional wave equation, the curvature of the characteristic variety disappears and the propagation of solutions essentially reduces to a transport structure. As a result, the dispersive properties employed in this paper do not operate in the same way in that setting. While certain Morawetz-type estimates may persist under such circumstances, the spacetime weighted $L^2$ estimate established here is closely related to the presence of this curvature structure. In this sense, the result may be understood as being inherently dispersive and higher-dimensional in nature.

Such features lie within the classical framework of dispersive and Strichartz estimates for wave equations. The oscillatory integral representation of the wave propagator and the role of curvature in the characteristic variety appear in the foundational work of Strichartz \cite{Str}. These structures underlie the subsequent development (\cite{LS, Mo, K}) of Strichartz estimates, as well as the abstract framework of Keel–Tao \cite{KT} based on the $TT^\ast$ argument and bilinear interpolation. These estimates play a fundamental role in the study of nonlinear dispersive equations; see, for instance, Tao’s monograph \cite{T}.

The rest of the paper is organized as follows.
In Section \ref{sec2}, we derive an explicit formula for the solution to the elastic wave equation, which allows us to reduce the problem to estimating the wave propagator $e^{it\sqrt{-\Delta}}$.
Based on this reduction, we prove the theorems in Section \ref{sec3}.
The estimate \eqref{mor} involving spatial weights is derived from the well-known estimate \eqref{wmor} for the wave propagator.
In contrast, the corresponding estimate \eqref{wmor2} for the space-time weighted estimate \eqref{mor2} is obtained by applying the Littlewood–Paley theory with Muckenhoupt $A_2$ weights to the frequency-localized estimates in Lemma \ref{FLE}.
These localized estimates are derived in Section \ref{sec4} by employing the $TT^*$ argument and subsequently applying bilinear interpolation between their bilinear form estimates given in Lemma \ref{local}.
In Section \ref{sec5}, to obtain the bilinear estimates optimally, we carry out further localizations in both space and time and perform a careful analysis of the relevant oscillatory integrals.

Throughout this paper, the letter $C$ denotes a positive constant which may vary from line to line.
We write $A \lesssim B$ to indicate that $A \leq C B$ for some unspecified constant $C > 0$.

\section{Explicit formula for the solution}\label{sec2}
To prove the theorems in later sections, we first derive an explicit formula for the solution to the elastic wave equation:
\begin{equation*}
\begin{cases}
\partial_t^2 u(x,t) - \Delta^* u(x,t) = 0, \\
u(x,0) = f(x),\quad \partial_t u(x,0) = g(x),
\end{cases}
\end{equation*}
where $\Delta^* u = \mu \Delta u + (\lambda + \mu) \nabla \mathrm{div}\, u$ with $\mu > 0$ and $\lambda + 2\mu > 0$, as introduced earlier.
Taking the Fourier transform in $x$ gives the ODE system:
\begin{equation*}
\begin{cases}
\partial_t^2 \hat{u}(\xi,t) + L(\xi) \hat{u}(\xi,t) = 0, \\
\hat{u}(\xi,0) = \hat{f}(\xi), \quad \partial_t \hat{u}(\xi,0) = \hat{g}(\xi),
\end{cases}
\end{equation*}
where the symbol $L(\xi)$ of $-\Delta^*$ is given by:
\[
L(\xi) = \mu |\xi|^2 I_n + (\lambda + \mu)\, \xi \xi^t.
\]
Let us define the orthogonal projections 
\[
P := \frac{\xi \xi^t}{|\xi|^2}, \quad Q := I_n - P = I_n - \frac{\xi \xi^t}{|\xi|^2}
\]
satisfying 
\(P^2=P\), \(Q^2=Q\), \(PQ=QP=0\) and \(P+Q=I_n\).  
Using this we can rewrite \(L(\xi)\) as
\[
\begin{aligned}
L(\xi)
&=\mu|\xi|^2(P+Q)+(\lambda+\mu)|\xi|^2P \\
&=\mu|\xi|^2Q+(\lambda+2\mu)|\xi|^2P.
\end{aligned}
\]
Thus \(L(\xi)\) has the eigenvalue \(\mu|\xi|^2\) on the subspace \(Q\mathbb{C}^n\) 
and \((\lambda+2\mu)|\xi|^2\) on the subspace \(P\mathbb{C}^n\).

We now decompose the Fourier-transformed solution $\widehat{u}(\xi,t)$ into
\[
\hat{u}(\xi,t) = \hat{u}_Q(\xi,t) + \hat{u}_P(\xi,t),
\]
where $\hat{u}_Q = Q \hat{u}$ and $\hat{u}_P = P \hat{u}$. 
Applying the projection operators $Q$ and $P$ to both sides of the equation
\[
\partial_t^2 \hat{u}(\xi,t) + L(\xi) \hat{u}(\xi,t) = 0,
\]
we obtain
\[
Q\partial_t^2(Q\hat{u}+P\hat{u}) + QL(\xi)(Q\hat{u} + P\hat{u}) = 0
\]
and 
\[
P\partial_t^2(Q\hat{u}+P\hat{u}) + PL(\xi)(Q\hat{u} + P\hat{u}) = 0.
\]
Since $PQ = QP = 0$, $P^2 = P$, and $Q^2 = Q$, and because $L(\xi)$ commutes with both $P$ and $Q$, we have
\begin{align*}
\partial_t^2 \hat{u}_Q(\xi,t) + \mu|\xi|^2 \hat{u}_Q(\xi,t) &= 0, \\
\partial_t^2 \hat{u}_P(\xi,t) + (\lambda + 2\mu)|\xi|^2 \hat{u}_P(\xi,t) &= 0.
\end{align*}
These are decoupled scalar wave equations in Fourier space for each projected component. 
The initial data are similarly decomposed:
\[
\hat{f} = Q\hat{f} + P\hat{f} =: \hat{f}_Q + \hat{f}_P, \qquad
\hat{g} = Q\hat{g} + P\hat{g} =: \hat{g}_Q + \hat{g}_P.
\]
Thus, the solutions to the decoupled systems are given by
\begin{align*}
\hat{u}_Q(\xi,t) &= \cos(t\sqrt{\mu}|\xi|)\, \hat{f}_Q + \frac{\sin(t\sqrt{\mu}|\xi|)}{\sqrt{\mu}|\xi|} \, \hat{g}_Q, \\
\hat{u}_P(\xi,t) &= \cos(t\sqrt{\lambda + 2\mu}|\xi|)\, \hat{f}_P + \frac{\sin(t\sqrt{\lambda + 2\mu}|\xi|)}{\sqrt{\lambda + 2\mu}|\xi|} \, \hat{g}_P.
\end{align*}

Taking inverse Fourier transforms, we obtain the physical-space solution
\begin{equation}\label{soldec}
u(x,t) = u_Q(x,t) + u_P(x,t),
\end{equation}
where
\begin{align*}
u_Q(x,t) &= \mathcal{F}^{-1}\left[ \cos(t\sqrt{\mu}|\xi|)\, Q\hat{f} + \frac{\sin(t\sqrt{\mu}|\xi|)}{\sqrt{\mu}|\xi|} \, Q\hat{g} \right](x), \\
u_P(x,t) &= \mathcal{F}^{-1}\left[ \cos(t\sqrt{\lambda + 2\mu}|\xi|)\, P\hat{f} + \frac{\sin(t\sqrt{\lambda + 2\mu}|\xi|)}{\sqrt{\lambda + 2\mu}|\xi|} \, P\hat{g} \right](x).
\end{align*}
Each component $(u_Q)_j$ of $u_Q$ solves the classical wave equation with wave speed $\sqrt{\mu}$ and initial data given by the $j$-th components of $\mathcal{F}^{-1}(Q\hat{f})$ and $\mathcal{F}^{-1}(Q\hat{g})$.
The same holds for $u_P$, with wave speed $\sqrt{\lambda + 2\mu}$ and initial data $\mathcal{F}^{-1}(P\hat{f})$ and $\mathcal{F}^{-1}(P\hat{g})$.

\section{Estimates for the wave propagator}\label{sec3} 
In this section, we prove Theorems \ref{thm} and \ref{thm2}.
By the Plancherel theorem and the fact that $P$ and $Q$ are orthogonal projections, we observe that
$$
\|\mathcal{F}^{-1}(Q\hat{f})\|_{\dot H^s}+\|\mathcal{F}^{-1}(P\hat{f})\|_{\dot H^s}
\sim \|Q\hat{f}\|_{\dot H^s}+\|P\hat{f}\|_{\dot H^s}
\sim\|\hat{f}\|_{\dot H^s}
\sim\|f\|_{\dot H^s}
$$
and similarly, 
$$
\|\mathcal{F}^{-1}(Q\hat{g})\|_{\dot H^{s-1}}+\|\mathcal{F}^{-1}(P\hat{g})\|_{\dot H^{s-1}}
\sim\|g\|_{\dot H^{s-1}}.
$$
Therefore, using the decomposition \eqref{soldec}, the estimate \eqref{mor} follows from applying the estimate \eqref{wmor} to each component of $\mathcal{F}^{-1}(Q\hat{f})$ and $\mathcal{F}^{-1}(Q\hat{g})$ 
individually. 
Similarly, the estimate \eqref{mor2} follows from applying \eqref{wmor2} to the same components.  
This completes the proof of the theorems.

\begin{prop}\label{prop}
Let $n\ge2$. If $0<s<(n-1)/2$ and $\alpha=1+2s$, then
\begin{equation}\label{wmor}
\big\|e^{it\sqrt{-\Delta}}f\big\|_{L_{x,t}^2(|x|^{-\alpha})} \lesssim \|f\|_{\dot H^{s}}.
\end{equation} 
If 
\begin{equation}\label{con3}
\frac12<s<\frac{n+1}{4}\quad\text{and}\quad 1+2s<\alpha<4s,
\end{equation}
then we have
\begin{equation}\label{wmor2}
\big\|e^{it\sqrt{-\Delta}}f\big\|_{L^2_{x,t}(|(x,t)|^{-\alpha})} \lesssim \|f\|_{\dot H^{s}}.
\end{equation}
\end{prop}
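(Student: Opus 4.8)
The plan is to treat the two estimates separately. For \eqref{wmor}, the idea is that this is a standard Morawetz-type bound for the half-wave propagator $e^{it\sqrt{-\Delta}}$, and it should already be available in the literature (e.g.\ from the work of Ruiz--Vega, or from the Morawetz--Strichartz circle of estimates, or from a weighted $L^2$ resolvent argument). Concretely I would reduce to the resolvent estimate: by Plancherel in $t$ and a standard limiting absorption / Kato-theory argument, \eqref{wmor} is equivalent to the uniform-in-$\tau$ bound $\||x|^{-\alpha/2}(-\Delta-(\tau\pm i0))^{-1}|x|^{-\alpha/2}\|_{L^2\to L^2}\lesssim |\tau|^{\text{(scaling exponent)}}$ with the weight paired against the $\dot H^s$ norm via $|\xi|^{2s}$; after rescaling $\tau=1$ this is the classical weighted estimate $\||x|^{-(1+2s)/2} f\|_{L^2}\lesssim \||\xi|^{-s}\hat f\|_{L^2}$ composed with the Stein--Tomas / trace bound on the sphere, which holds precisely in the range $0<s<(n-1)/2$. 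Alternatively, and perhaps more cleanly, one writes $\|e^{it\sqrt{-\Delta}}f\|_{L^2_{x,t}(|x|^{-\alpha})}^2 = \int |x|^{-\alpha}\int |\widehat{f}(\xi)|^2$-type expression after Plancherel in $t$ localizing $|\xi|$ to the sphere $|\xi|=|\tau|$, and invokes the homogeneous weighted estimate $\||x|^{-\beta}g\|_{L^2}\lesssim \|g\|_{\dot H^{\beta}}$ for $0<\beta<n/2$ together with interpolation against the spherical restriction; I would just cite the relevant statement. This first part I do not expect to be hard.

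The substantive part is \eqref{wmor2}. The space-time weight $|(x,t)|^{-\alpha}$ is homogeneous of degree $-\alpha$ in the full variable $(x,t)\in\mathbb R^{n+1}$, so the natural move is a Littlewood--Paley decomposition $f=\sum_k P_k f$ in the spatial frequency $|\xi|\sim 2^k$, together with the scaling $e^{it\sqrt{-\Delta}}(P_k f)(x) = (e^{it'\sqrt{-\Delta}}(P_0 f_k))(x')$ after $x=2^{-k}x'$, $t=2^{-k}t'$, which turns the degree-$(-\alpha)$ weight into $2^{k\alpha}|(x',t')|^{-\alpha}$ and the measure $dx\,dt$ into $2^{-k(n+1)}dx'dt'$, giving a clean power of $2^k$ per frequency block. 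The plan is then: (i) prove the single frequency-localized estimate $\|e^{it\sqrt{-\Delta}}P_0 f\|_{L^2_{x,t}(|(x,t)|^{-\alpha})}\lesssim \|f\|_{L^2}$ — this is exactly (a rescaled version of) Lemma \ref{FLE} as announced in the excerpt, to be proved in Section \ref{sec4} via a $TT^*$ argument and bilinear interpolation between the endpoint bilinear bounds of Lemma \ref{local}; and (ii) reassemble the blocks. For (ii), because the weight $|(x,t)|^{-\alpha}$ is \emph{not} a product weight and in particular $|x|^{-\alpha}\chi$ need not lie in the Muckenhoupt class $A_2(\mathbb R^n)$ uniformly, I would instead observe that $|(x,t)|^{-\alpha}\in A_2(\mathbb R^{n+1})$ for $0<\alpha<n+1$ and use the weighted Littlewood--Paley square function inequality $\|F\|_{L^2(w)}\sim \|(\sum_k |P_k^{(x)}F|^2)^{1/2}\|_{L^2(w)}$ valid for $A_2$ weights (here $P_k^{(x)}$ acts only in $x$, but the square function bound with an $\mathbb R^{n+1}$ $A_2$ weight still holds since it follows from the vector-valued Calderón--Zygmund theory applied in the $x$ variables with the weight frozen appropriately — this needs a short justification). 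Combining the square function bound, the rescaled Lemma \ref{FLE}, and summing the geometric series $\sum_k 2^{k(\alpha-1-2s)}$ against orthogonality of the $P_k f$ in $\dot H^s$ forces $\alpha>1+2s$ for convergence at high frequencies and $\alpha<$ something at low frequencies; the upper constraint $\alpha<4s$ together with $s<(n+1)/4$ should emerge from the range of validity of the frequency-localized estimate of Lemma \ref{FLE} itself (the bilinear interpolation in Section \ref{sec5} only closes in that range).

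The main obstacle, and where I would spend the most care, is step (i): the frequency-localized space-time weighted bound. After $TT^*$ one is led to estimate a bilinear form $\iint K(x,t;y,s)\,\overline{G(x,t)}\,H(y,s)\,dxdt\,dyds$ with kernel essentially $|(x,t)|^{-\alpha/2}|(y,s)|^{-\alpha/2}$ times the (frequency-localized) kernel of $e^{i(t-s)\sqrt{-\Delta}}$, i.e.\ an oscillatory integral $\int_{|\xi|\sim1} e^{i(x-y)\cdot\xi}e^{i(t-s)|\xi|}\,d\xi$, whose stationary-phase behavior is governed by the light cone $|x-y|\approx|t-s|$ and decays like $(1+||x-y|-|t-s||)^{-N}(1+|t-s|)^{-(n-1)/2}$. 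The difficulty is that the two weight singularities, at $(x,t)=0$ and $(y,s)=0$, sit on the light cone $\{|x|=|t|\}$ and $\{|y|=|s|\}$, so the worst interaction is when both points are near the origin and the kernel is least integrable; one must localize dyadically in $|(x,t)|\sim 2^j$, $|(y,s)|\sim 2^l$, and within each piece further split into light-cone and off-cone regions and into the angular variables, then sum. Getting the full open range $1+2s<\alpha<4s$ rather than a smaller range is exactly what requires the "further localizations in both space and time" flagged in the introduction, and bilinear interpolation (rather than a single $TT^*$ estimate) is needed because the two natural endpoint bilinear bounds — one favoring the decay in $|t-s|$, the other favoring the off-cone decay — each individually give only a sub-optimal $\alpha$-range, and only their interpolant covers the stated region.
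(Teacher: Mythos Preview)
Your overall plan---$A_2$-weighted Littlewood--Paley, frequency-localized $TT^*$, bilinear interpolation, extra space-time localization---matches the paper's architecture. But your reassembly step (ii) has a genuine gap. If you prove the $k=0$ localized estimate and then rescale, homogeneity of $|(x,t)|^{-\alpha}$ forces the $k$-th block to carry \emph{exactly} the factor $2^{k(\alpha-1)/2}$, and after the square-function bound the $k$-sum is $\sum_k 2^{k(\alpha-1)}\|P_kf\|_{L^2}^2=\|f\|_{\dot H^{(\alpha-1)/2}}^2$ by orthogonality. There is no residual geometric series in $k$; you land on the line $\alpha=1+2s$, not in the open region $1+2s<\alpha<4s$. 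Your claim that ``summing $\sum_k 2^{k(\alpha-1-2s)}$ forces $\alpha>1+2s$'' is therefore mistaken---once orthogonality is used, that sum is not there.

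The paper does \emph{not} rescale to $k=0$. It proves Lemma~\ref{FLE} directly with the $k$-dependent bound $2^{k(n+1)/(4p)}$ by tracking two independent scales through the $TT^*$ argument: the frequency scale $2^k$ and a dyadic time-difference scale $2^j$ (from decomposing $|t-s|\sim 2^j$). The two bilinear endpoints in Lemma~\ref{local} then carry \emph{different} $k$-powers---\eqref{L2} is $k$-independent (pure Plancherel/unitarity), while \eqref{wL2} carries $2^{(n+1)k/2}$ (from the dispersive kernel bound)---and the bilinear-interpolation parameter $\theta=1/p$ is a \emph{free} choice not tied to $\alpha$. This is precisely what decouples $s=(n+1)/(4p)$ from $\alpha$. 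The strict inequality $\alpha>1+2s$ emerges from summability of the geometric series in the \emph{time-difference} index $j$, namely $\sum_{j\ge0}2^{j(1+(n+1)/(2p)-\alpha)}<\infty$, not from any $k$-sum; the upper constraint $\alpha<4s=(n+1)/p$ comes from the integrability condition $\alpha p<n+1$ needed in \eqref{wL2}. Two smaller corrections: (a) your description of the two interpolation endpoints as ``on-cone versus off-cone with angular decomposition'' is not what the paper does---the split is unweighted $L^2$ (Plancherel) versus weighted $L^2$ (pointwise kernel bound), with a spatial cube decomposition at scale $2^j$ used only inside \eqref{wL2}; (b) for the $A_2$ square-function step, the paper's clean justification is to apply Lebesgue differentiation on product cubes $Q\times I$ to deduce that for a.e.\ fixed $t$ the slice $x\mapsto|(x,t)|^{-\alpha}$ lies in $A_2(\mathbb R^n)$ with uniform constant, so the standard weighted Littlewood--Paley inequality applies in $x$ for each fixed $t$.
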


The estimate \eqref{wmor} involving spatial weights is already well known; see, for instance, \cite{HMSSZ}. In contrast, the space-time weighted estimate \eqref{wmor2} appears to be new. The remainder of this paper is devoted to establishing this result.

\begin{proof}[Proof of \eqref{wmor2}]
The proof is based on Littlewood-Paley theory in weighted $L^2$ spaces.

Let us first recall that a \textit{weight}\footnote{A locally integrable function that may vanish or diverge only on a set of Lebesgue measure zero.} 
$w : \mathbb{R}^{n} \to [0, \infty]$ belongs to the Muckenhoupt $A_2(\mathbb{R}^n)$ class if there exists a constant $C_{A_2}$ such that
\[
\sup_{Q\, \textnormal{cubes in } \mathbb{R}^{n}} \left( \frac{1}{|Q|}\int_Q w(x)\,dx \right)\left( \frac{1}{|Q|}\int_Q w(x)^{-1}\,dx \right) < C_{A_2}.
\]
(See, e.g., \cite{G} for details.)
We now observe that the weight $|(x,t)|^{-\alpha}$ belongs to $A_2(\mathbb{R}^{n+1})$ for $-(n+1) < \alpha < n+1$. That is, there exists a constant $C_{A_2}$ such that
\begin{equation} \label{spw}
\sup_{\widetilde{Q} \, \textnormal{cubes in } \mathbb{R}^{n+1}} \left( \frac{1}{|\widetilde{Q}|} \int_{\widetilde{Q}} |(x,t)|^{-\alpha} \,dxdt \right)\left( \frac{1}{|\widetilde{Q}|} \int_{\widetilde{Q}} |(x,t)|^{\alpha} \,dxdt \right) < C_{A_2}.
\end{equation}

Now, let $Q \subset \mathbb{R}^n$ be a cube and $I \subset \mathbb{R}$ be an interval. Applying \eqref{spw} to the product set $\widetilde{Q} = Q \times I$, we obtain
\[
\sup_{Q \subset \mathbb{R}^n,\, I \subset \mathbb{R}} \left( \frac{1}{|I|} \int_I \frac{1}{|Q|} \int_Q |(x,t)|^{-\alpha} \,dxdt \right)\left( \frac{1}{|I|} \int_I \frac{1}{|Q|} \int_Q |(x,t)|^{\alpha} \,dxdt \right) < C_{A_2}.
\]
Since $|(x,t)|^{\pm\alpha} \in L^1_{\mathrm{loc}}(\mathbb{R}^n)$ for $-(n+1) < \alpha < n+1$, we may apply Lebesgue’s differentiation theorem to pass to the limit as $|I| \to 0$. This shows that for almost every fixed $t \in \mathbb{R}$, the function $x \mapsto |(x,t)|^{-\alpha}$ lies in $A_2(\mathbb{R}^n)$, with a uniform bound on the $A_2$ constant.

We are now in a position to apply the Littlewood–Paley theory on weighted $L^2$ spaces with $A_2$ weights (see Theorem~1 in \cite{Ku}) to obtain
\begin{align}\label{LP}
\nonumber\big\| e^{it\sqrt{-\Delta}}\big(\sum_{j\in\mathbb{Z}}P_jf&\big)\big\|_{L^2_{x,t}(|(x,t)|^{-\alpha})}^2
=\int_{\mathbb{R}} \big\|e^{it\sqrt{-\Delta}}\big(\sum_{j\in\mathbb{Z}}P_jf\big)\big\|_{L^2_{x}(|(x,t)|^{-\alpha})}^2 dt \nonumber\\
&\lesssim \int_{\mathbb{R}} \bigg\| \Big(\sum_{k\in\mathbb{Z}} \Big|P_k e^{it\sqrt{-\Delta}}\big(\sum_{j\in\mathbb{Z}}P_jf\big)\Big|^2\Big)^{1/2}\bigg\|_{L^2_{x}(|(x,t)|^{-\alpha})}^2 dt \nonumber\\
&= \int_{\mathbb{R}} \bigg\| \Big(\sum_{k\in\mathbb{Z}} \Big|e^{it\sqrt{-\Delta}}P_k\big(\sum_{|j-k|\leq1}P_jf\big)\Big|^2\Big)^{1/2}\bigg\|_{L^2_{x}(|(x,t)|^{-\alpha})}^2 dt \nonumber\\
&\lesssim \sum_{k\in\mathbb{Z}} \big\| e^{it\sqrt{-\Delta}}P_k\big(\sum_{|j-k|\leq1}P_jf\big)\big\|_{L^2_{x,t}(|(x,t)|^{-\alpha})}^2.
\end{align}
Here we used the fact that $P_k P_j f = 0$ whenever $|j-k|\ge 2$.  
The operators $P_k$, $k \in \mathbb{Z}$, denote the Littlewood–Paley projections defined by
\[
\widehat{P_k f}(\xi) = \phi(2^{-k}|\xi|)\hat{f}(\xi),
\]
where $\phi:\mathbb{R}\to[0,1]$ is a smooth cut-off function supported in $(1/2,2)$ and satisfying 
\[
\sum_{k\in\mathbb{Z}}\phi(2^k t)=1,\quad t>0.
\]
Using \eqref{LP}, we therefore deduce
\begin{align}\label{over}
\big\|e^{it\sqrt{-\Delta}}f\big\|_{L^2_{x,t}(|(x,t)|^{-\alpha})}
\lesssim \Big(\sum_{k\in\mathbb{Z}} \big\| e^{it\sqrt{-\Delta}}P_k\big(\sum_{|j-k|\leq1}P_jf\big)\big\|_{L^2_{x,t}(|(x,t)|^{-\alpha})}^2\Big)^{1/2}.
\end{align}

To estimate the right-hand side of \eqref{over}, we apply the following frequency-localized estimate, which will be established in the next section.

\begin{lem}\label{FLE}
Let $n \ge 2$. Suppose $1 < p < \frac{n+1}{2}$ and $1 + \frac{n+1}{2p} <\alpha < \frac{n+1}{p}$. Then for all $k \in \mathbb{Z}$, we have
\begin{equation}\label{LE2}
\big\|e^{it\sqrt{-\Delta}}P_k f\big\|_{L^2_{x,t}(|(x,t)|^{-\alpha})} \lesssim 2^{k(\frac{n+1}{4p})} \|f\|_{L^2}.
\end{equation}
\end{lem}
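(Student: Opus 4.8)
To prove Lemma~\ref{FLE} I would run a $TT^*$ argument directly in the weighted space, reduce matters to a family of dyadic bilinear forms generated by the half-wave kernel, and then recover the $L^2$-based bound \eqref{LE2} by bilinear interpolation.

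Write $W=|(x,t)|^{-\alpha}$ and $S_k:=W^{1/2}e^{it\sqrt{-\Delta}}P_k\colon L^2(\mathbb{R}^n_x)\to L^2(\mathbb{R}^{n+1}_{x,t})$. By the $TT^*$ principle, \eqref{LE2} is equivalent to $\|S_kS_k^*\|_{L^2(\mathbb{R}^{n+1})\to L^2(\mathbb{R}^{n+1})}\lesssim 2^{k(n+1)/(2p)}$, i.e. to the bilinear estimate
\[
\Big|\iint\!\!\iint K_k(x-y,t-s)\,|(x,t)|^{-\alpha/2}|(y,s)|^{-\alpha/2}\,F(y,s)\,\overline{G(x,t)}\;dy\,ds\,dx\,dt\Big|\lesssim 2^{k(n+1)/(2p)}\|F\|_{2}\|G\|_{2},
\]
where $K_k$ is the convolution kernel of $e^{i(t-s)\sqrt{-\Delta}}P_k^2$, namely $K_k(z,\tau)=\mathcal{F}^{-1}_\xi\big[e^{i\tau|\xi|}\phi(2^{-k}|\xi|)^2\big](z)$. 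A standard stationary-phase analysis gives $|K_k(z,\tau)|\lesssim 2^{kn}$ for $|(z,\tau)|\lesssim 2^{-k}$ and, for $|(z,\tau)|\gtrsim 2^{-k}$, $|K_k(z,\tau)|\lesssim 2^{k(n+1)/2}|z|^{-(n-1)/2}\big(1+2^k\big||z|-|\tau|\big|\big)^{-N}$ for every $N$, so that $K_k$ is, up to rapidly decaying tails, concentrated in an $O(2^{-k})$-neighborhood of the light cone $|z|=|\tau|$.

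I would then decompose dyadically both in the space-time separation and in the location of the two inputs. Fix $1=\sum_{l}\beta(2^{-l}|(z,\tau)|)$ with $\beta$ supported in $[1,2]$, put $K_k^l:=\beta(2^{-l}|(z,\tau)|)K_k$, and split $F=\sum_m F_m$, $G=\sum_{m'}G_{m'}$ with $F_m,G_{m'}$ supported where $|(y,s)|\sim 2^m$ and $|(x,t)|\sim 2^{m'}$. On each piece the weights are essentially constant, $\sim 2^{-m\alpha/2}$ and $\sim 2^{-m'\alpha/2}$, so the $(l,m,m')$-contribution equals $2^{-(m+m')\alpha/2}$ times the unweighted dyadic bilinear form $B_k^l(F_m,G_{m'})=\iint\!\!\iint K_k^l(x-y,t-s)F_m(y,s)\overline{G_{m'}(x,t)}$, which vanishes unless $2^l\lesssim 2^{\max(m,m')}$ (and $2^l\sim 2^{\max(m,m')}$ when $m\neq m'$). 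For $B_k^l$ I would invoke the endpoint bilinear-form bounds supplied by Lemma~\ref{local}; in crude form these amount to a Young-type estimate $|B_k^l(F_m,G_{m'})|\lesssim\|K_k^l\|_{L^1}\|F_m\|_2\|G_{m'}\|_2$ and a dispersive estimate $|B_k^l(F_m,G_{m'})|\lesssim\|K_k^l\|_{L^\infty}2^{(m+m')(n+1)/2}\|F_m\|_2\|G_{m'}\|_2$ (using $\|F_m\|_1\lesssim 2^{m(n+1)/2}\|F_m\|_2$), where the pointwise bounds above give $\|K_k^l\|_{L^1}\lesssim 2^{k(n-1)/2}2^{l(n+1)/2}$ and $\|K_k^l\|_{L^\infty}\lesssim 2^{k(n+1)/2}2^{-l(n-1)/2}$ for $l\ge-k$, with obvious modifications for $l\le-k$. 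Establishing these with constants precise enough to reach the full admissible range requires the finer localizations in space and time carried out in Section~\ref{sec5}; here I take Lemma~\ref{local} as given.

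The last step is to bilinearly interpolate the two endpoint families against each other (equivalently, apply Riesz–Thorin for bilinear forms), choosing the interpolation parameter so that the Lebesgue indices land at $L^2\times L^2$; reinserting the factors $2^{-(m+m')\alpha/2}$ and summing the geometric series first in $m,m'$ and then in $l$ (the transition $2^l\sim 2^{-k}$ producing the factor $2^{k(n+1)/(2p)}$) gives the displayed bilinear estimate and hence \eqref{LE2}. The two strict inequalities $\alpha>1+\tfrac{n+1}{2p}$ and $\alpha<\tfrac{n+1}{p}$ are precisely what make, respectively, the $m,m'\to\infty$ tails and the near-diagonal contributions $m,m'\sim l$ summable, while $1<p<\tfrac{n+1}{2}$ keeps the interpolation parameter in $(0,1)$ and the intermediate exponent admissible. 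I expect the real obstacle to be not this bookkeeping but the oscillatory-integral estimates behind Lemma~\ref{local}: one must control $K_k^l$, and the associated weighted bilinear forms, uniformly in all the localization parameters, and it is exactly there that the extra space/time decompositions are needed so that interpolation recovers the \emph{open} region of $(\alpha,p)$ rather than only its critical scaling line.
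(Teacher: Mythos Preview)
Your architecture---$TT^*$, dyadic decomposition of the kernel, two endpoint bilinear bounds, then interpolation and summation---is the paper's. But your execution of the interpolation step diverges from the paper and, as written, does not close.

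The paper decomposes only in $|t-s|\sim 2^{j}$ (not in the full space--time separation $|(z,\tau)|$), and it does \emph{not} break $F,G$ into shells $|(y,s)|\sim 2^{m}$, $|(x,t)|\sim 2^{m'}$ to freeze the weight. Instead it keeps the weight and proves two endpoints for $B_j$ that live in \emph{different} weighted $L^2$ spaces: the Plancherel bound $|B_j(F,G)|\lesssim 2^{j}\|F\|_{L^2}\|G\|_{L^2}$ and the dispersive bound $|B_j(F,G)|\lesssim 2^{(n+1)k/2}2^{j((n+3)/2-\alpha p)}\|F\|_{L^2(|(x,t)|^{\alpha p})}\|G\|_{L^2(|(x,t)|^{\alpha p})}$. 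These are then combined by \emph{real bilinear interpolation of weighted $L^2$ spaces} (viewing $\{B_j\}_j$ as a map into $\ell^{s}_\infty$) with parameter $\theta=1/p$, which sends the pair $(L^2,L^2(|(x,t)|^{\alpha p}))$ to $L^2(|(x,t)|^{\alpha})$. The parameter $p$ thus enters as the exponent of the weight in the second endpoint; the constraint $\alpha<(n+1)/p$ is exactly the local integrability of $|(x,t)|^{-\alpha p}$ needed for that endpoint, and $\alpha>1+(n+1)/(2p)$ is the summability of $\sum_j 2^{-js_2}$ after interpolation.

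Your sketch instead takes a geometric mean of two bounds that are \emph{both} on unweighted $L^2\times L^2$ (your ``Riesz--Thorin'' step has nothing to interpolate---the Lebesgue indices are already $2$ on both sides), and your first endpoint $\|K_k^l\|_{L^1}\sim 2^{k(n-1)/2}2^{l(n+1)/2}$ is much weaker than the Plancherel bound $2^{j}$ the paper uses. If you run your bookkeeping with those ingredients, the $m,m'\to+\infty$ sum forces $\alpha>(n+1)/p$, the opposite of the hypothesis; with the correct Plancherel endpoint the diagonal sum closes, but then the role of $p$ and the origin of the upper bound $\alpha<(n+1)/p$ are left unexplained in your scheme. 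In short, the missing idea is that the interpolation is between weighted spaces $L^2$ and $L^2(|(x,t)|^{\alpha p})$, not between Lebesgue exponents, and the auxiliary parameter $p$ is the weight power---not a free interpolation knob chosen after the fact.
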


Applying Lemma~\ref{FLE}, we estimate
\begin{align}\label{eq22}
\sum_{k\in\mathbb{Z}}\big\| e^{it\sqrt{-\Delta}}P_k\big(\sum_{|j-k|\in\mathbb{Z}}P_jf\big)\big\|_{L^2_{x,t}(|(x,t)|^{-\alpha})}^2
&\lesssim \sum_{k\in\mathbb{Z}}2^{2k(\frac{n+1}{4p})} \Big\|\sum_{|j-k|\le 1} P_j f\Big\|_{L^2}^2 \nonumber\\
&\lesssim   \sum_{k\geq1}2^{2k(\frac{n+1}{4p})} \|P_k f\|_{L^2}^2.
\end{align}
Combining \eqref{over} and \eqref{eq22}, we obtain
\begin{align*}
\big\|e^{it\sqrt{-\Delta}}f\big\|_{L^2_{x,t}(|(x,t)|^{-\alpha})}
&\lesssim\Big(\sum_{k\in\mathbb{Z}}2^{2k(\frac{n+1}{4p})} \|P_k f\|_{L^2}^2\Big)^{1/2}\\
&\lesssim\|f\|_{\dot H^{\frac{n+1}{4p}}}
\end{align*}
if $1<p<\frac{n+1}{2}$ and $ 1+\frac{n+1}{2p} < \alpha < \frac{n+1}{p}$.
Setting $s = \frac{n+1}{4p}$ and eliminating the intermediate parameter $p$ yields the desired condition \eqref{con3}.
Therefore, we get \eqref{wmor2}.
\end{proof}

\section{Frequency-localized estimates: Proof of Lemma \ref{FLE}}\label{sec4}
In this section, we establish the frequency-localized estimate stated in Lemma~\ref{FLE}.  
Our approach is based on the standard $TT^*$ argument, followed by bilinear interpolation between the associated bilinear form estimates.  
To achieve optimal sharpness in \eqref{LE2}, we further introduce additional localizations in both space and time.

We begin by considering the operator
\[
Tf := e^{it\sqrt{-\Delta}}P_k f,
\]
and its adjoint
\[
T^*F := \int_{-\infty}^{\infty} e^{-is\sqrt{-\Delta}} P_k F(\cdot, s) \, ds.
\]
Then, by the standard $TT^*$ argument, the estimate \eqref{LE2} is equivalent to
\begin{equation}\label{df}
\bigg\|\int_{-\infty}^{\infty}  e^{i(t-s)\sqrt{-\Delta}}P_k^2 F(\cdot,s)\, ds\bigg\|_{L^2_{x,t}(|(x,t)|^{-\alpha})}
 \lesssim 2^{\frac{(n+1)k}{2p}}  \|F\|_{L^2_{x,t}(|(x,t)|^{\alpha})}.
\end{equation}
We divide the time integration $\int_{-\infty}^{\infty}$ into two parts, $\int_{-\infty}^{t}$ and $\int_{t}^{\infty}$, and use duality to reduce the proof of \eqref{df} to the following bilinear estimate:
\begin{align}\label{biest}
\bigg|\bigg\langle\int_{-\infty}^{t}  e^{i(t-s)\sqrt{-\Delta}}P_k^2 &F(\cdot,s)  ds, G(x,t)\bigg\rangle_{L_{x,t}^2}\bigg|\nonumber\\
&\lesssim 2^{\frac{(n+1)k}{2p}} \|F\|_{L^2_{x,t}(|(x,t)|^{\alpha})}\|G\|_{L^2_{x,t}(|(x,t)|^{\alpha})}.
\end{align}

To estimate the left-hand side of \eqref{biest}, we perform a dyadic decomposition in time.  
Let $I_j = [2^{j-1}, 2^j)$ for $j \ge 1$, and $I_0 = [0,1)$. For each $j \ge 0$, define the time difference intervals $t - I_j := (t - 2^j, t - 2^{j-1}]$ (or $(t-1, t]$ when $j=0$). Then we can write:
\begin{equation}\label{jsum}
\left\langle \int_{-\infty}^{t} e^{i(t-s)\sqrt{-\Delta}} P_k^2 F(\cdot, s)\, ds,\, G(x,t) \right\rangle_{L^2_{x,t}}
= \sum_{j \ge 0} B_j(F,G),
\end{equation}
where
\[
B_j(F,G) := \int_{\mathbb{R}} \int_{t - I_j} \left\langle e^{i(t-s)\sqrt{-\Delta}} P_k^2 F(\cdot, s),\, G(x,t) \right\rangle_{L^2_x} ds dt.
\]

Now we deduce the desired bilinear estimate \eqref{biest} by interpolating between the following two estimates for each dyadic piece $B_j(F,G)$. These estimates will be proved in the next section.

\begin{lem}\label{local}
Let $n \ge 2$. Then, for each $j \ge 0$, the bilinear form $B_j(F,G)$ satisfies the following estimates:
\begin{equation}\label{L2}
|B_j(F,G)| \le C\, 2^{j} \| F \|_{L^2} \| G \|_{L^2},
\end{equation}
and
\begin{equation}\label{wL2}
|B_j(F,G)| \le C\, 2^{\frac{(n+1)k}{2}}\, 2^{j(\frac{n+3}{2} - \alpha p)} \|F\|_{L^2 (|(x,t)|^{\alpha p})} \|G\|_{L^2 (|(x,t)|^{\alpha p})},
\end{equation}
provided that $\alpha < (n+1)/p$.
\end{lem}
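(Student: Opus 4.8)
The two inequalities are proved by very different means: \eqref{L2} is elementary, while \eqref{wL2} is where the oscillatory-integral analysis lives. For \eqref{L2}, note that $e^{i(t-s)\sqrt{-\Delta}}$ is unitary on $L^2_x$ and $P_k^2$ is a bounded Fourier multiplier, so $\|e^{i(t-s)\sqrt{-\Delta}}P_k^2F(\cdot,s)\|_{L^2_x}\le\|F(\cdot,s)\|_{L^2_x}$, and Cauchy--Schwarz in $x$ gives $|B_j(F,G)|\le\int_{\mathbb{R}}\int_{t-I_j}\|F(\cdot,s)\|_{L^2_x}\|G(\cdot,t)\|_{L^2_x}\,ds\,dt$. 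Writing $a(s)=\|F(\cdot,s)\|_{L^2_x}$ and $b(t)=\|G(\cdot,t)\|_{L^2_x}$, the right-hand side is $\int_{\mathbb{R}}b(t)\,(\mathbf{1}_{I_j}*a)(t)\,dt\le\|b\|_{L^2}\|\mathbf{1}_{I_j}*a\|_{L^2}\le|I_j|\,\|a\|_{L^2}\|b\|_{L^2}\lesssim 2^j\|F\|_{L^2}\|G\|_{L^2}$ by Cauchy--Schwarz, Young's inequality, and $|I_j|\lesssim 2^j$. This is \eqref{L2}.

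For \eqref{wL2}, expand the inner product so that $B_j(F,G)$ becomes the integral over $\mathbb{R}^{2(n+1)}$ of $\mathbf{1}_{t-s\in I_j}\,K_{t-s}(x-y)\,F(y,s)\,\overline{G(x,t)}$, where $K_\tau=\mathcal{F}^{-1}\big[e^{i\tau|\xi|}\phi(2^{-k}|\xi|)^2\big]$ is the kernel of $e^{i\tau\sqrt{-\Delta}}P_k^2$ and $\tau=t-s\in I_j$ forces $|\tau|\sim 2^j$ when $j\ge1$. A stationary/non-stationary phase analysis of this oscillatory integral yields, in the range $2^{j+k}\gtrsim1$, the bounds $\|K_\tau\|_{L^\infty}\lesssim 2^{(n+1)k/2}2^{-(n-1)j/2}$ and $\|K_\tau\|_{L^1}\lesssim 2^{(n-1)(j+k)/2}$, together with the localization that, up to rapidly decaying tails, $K_\tau$ is supported on the shell $\{\,\big||z|-|\tau|\big|\lesssim 2^{-k}\,\}$, hence essentially on $\{|z|\sim 2^j\}$; the complementary range $2^{j+k}\lesssim1$ (where $K_\tau$ is merely an $L^1$-normalized bump at scale $2^{-k}$) and the single piece $j=0$ are handled separately and are easier. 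In this way \eqref{wL2} is reduced to an $L^2(\mathbb{R}^{n+1})$-bound, with operator norm $\lesssim 2^{(n+1)k/2}2^{j(\frac{n+3}{2}-\alpha p)}$, for the integral operator whose kernel is $|(x,t)|^{-\alpha p/2}\,\mathbf{1}_{t-s\in I_j}\,|K_{t-s}(x-y)|\,|(y,s)|^{-\alpha p/2}$.

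The essential difficulty is that the weight $|(x,t)|^{-\alpha p}$ is neither integrable at infinity (as $\alpha p<n+1$) nor bounded near the origin, so a plain Schur test on the displayed kernel diverges. The remedy is to decompose $F$ and $G$ dyadically in the spatial variables, $|x|\sim 2^{\ell}$, $|y|\sim 2^{m}$, and in the space--time variables, $|(x,t)|\sim 2^{\mu}$, $|(y,s)|\sim 2^{\mu'}$. On $\{t-s\in I_j\}$ the relation $|t-s|\sim 2^j$ couples $|s|$ and $|t|$ — forcing, for instance, $|(y,s)|\sim 2^j$ whenever $|(x,t)|\ll 2^j$ — while the shell localization of $K_{t-s}$ forces $|x-y|\sim 2^j$; together these confine the admissible quadruples $(\ell,m,\mu,\mu')$ to a thin range near the diagonal. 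On each admissible piece the two weight factors are comparable to fixed powers of $2^{\mu}$ and $2^{\mu'}$, so one pulls them out and estimates the remaining truncated kernel $\mathbf{1}_{|x|\sim 2^\ell}\,|K_{t-s}(x-y)|\,\mathbf{1}_{|y|\sim 2^m}$ by a plain Schur bound, computing the two Schur integrals from the $L^1$- and $L^\infty$-bounds on $K_{t-s}$ and the light-cone geometry. Summing the resulting geometric series — convergent at the small-scale end precisely because $\alpha p<n+1$, and at the large-scale end because $\alpha p>0$ — reassembles the constant $2^{(n+1)k/2}2^{j(\frac{n+3}{2}-\alpha p)}$ in \eqref{wL2}.

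I expect the main obstacle to be exactly this multi-parameter bookkeeping: identifying which localization scales genuinely interact given the light cone and the time constraint, and verifying that in every regime the emerging powers of $2^j$ and $2^k$ are dominated by $2^{(n+1)k/2}2^{j(\frac{n+3}{2}-\alpha p)}$. In particular, obtaining the sharp $2^j$-exponent rather than a lossy one requires genuinely exploiting the shell concentration of $K_{t-s}$, not merely its sup-norm, which is where the careful oscillatory-integral estimates are indispensable; the bilinear splitting $\int_{-\infty}^t=\sum_{j\ge0}\int_{t-I_j}$ then makes \eqref{L2}--\eqref{wL2} interpolate (with $\theta=1/p$, matching weight $|(x,t)|^{\alpha}$) into a $j$-summable bound, which is why the two exponents in Lemma~\ref{local} are the natural endpoints.
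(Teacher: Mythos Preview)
Your proof of \eqref{L2} is correct and essentially the paper's argument, packaged a bit more cleanly via Young's inequality on the time convolution rather than by an explicit partition of $\mathbb{R}$ into $2^j$-intervals.

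For \eqref{wL2} your strategy is sound, and the kernel bounds you quote (the dispersive estimate $\|K_\tau\|_{L^\infty}\lesssim 2^{(n+1)k/2}2^{-(n-1)j/2}$ and rapid decay off the light cone) are exactly what the paper uses in its Lemma~\ref{osci}. But the decomposition you propose---dyadic annuli in $|x|$, $|y|$, $|(x,t)|$, $|(y,s)|$ plus the thin-shell localization of $K_\tau$---is more elaborate than what the paper actually does. The paper instead tiles $\mathbb{R}^{n+1}$ by a \emph{uniform grid of $2^j$-cubes}: intervals $[2^j\ell,2^j(\ell+1))$ in time and cubes $2^j\rho+[0,2^j)^n$ in space, with $\ell\in\mathbb{Z}$, $\rho\in\mathbb{Z}^n$. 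On this grid the kernel bound becomes a simple dichotomy in the cube separation $|\rho_1-\rho_2|$ (stationary phase for near-diagonal, integration by parts for far), and the weight is disposed of in one stroke by Cauchy--Schwarz together with the single uniform estimate
\[
\int_{\text{any }2^j\text{-cube in }\mathbb{R}^{n+1}}|(y,s)|^{-\alpha p}\,dy\,ds \lesssim 2^{j(n+1-\alpha p)},
\]
valid precisely because $\alpha p<n+1$ (the worst cube is the one containing the origin). This replaces your multi-parameter bookkeeping entirely: there is no need to track separate scales $\mu,\mu'$ for the weight, and no need for the $L^1$-bound or the $2^{-k}$-thin shell structure of $K_\tau$---the $L^\infty$ bound plus off-diagonal decay in the grid index suffices. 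Your approach would likely go through as well, but the paper's grid at the single scale $2^j$ (matched to $|t-s|\sim 2^j$) is what avoids the obstacle you yourself anticipate.
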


First, we define the bilinear vector-valued operator
\[
B(F,G) = \big\{ B_j(F,G) \big\}_{j \ge 0}.
\]
Then, the bounds \eqref{L2} and \eqref{wL2} can be rephrased as the operator estimates
\[
B : L^2 \times L^2 \longrightarrow \ell_{\infty}^{s_0} \quad \text{and} \quad  B : L^2 (|(x,t)|^{\alpha p}) \times L^2 (|(x,t)|^{\alpha p}) \longrightarrow \ell_{\infty}^{s_1},
\]
with operator norms bounded by $C$ and $C 2^{\frac{(n+1)k}{2}}$, respectively. Here,
\[
s_0 = -1, \quad s_1 = -(\frac{n+3}{2} - \alpha p).
\]
By applying the bilinear interpolation lemma (see Lemma~\ref{biint} below) with the parameters $\theta = 1/p$, $q = \infty$, and $p_1 = p_2 = 2$, we obtain
\begin{align*}
B : \big(L^2, L^2(|(x,t)|^{\alpha p})\big)_{1/p,2} \times \big(L^2, L^2(|(x,t)|^{\alpha p})\big)_{1/p,2}
\longrightarrow \big(\ell_{\infty}^{s_0}, \ell_{\infty}^{s_1}\big)_{1/p,\infty},
\end{align*}
with operator norm bounded by $C^{1-\theta}\cdot(C 2^{\frac{(n+1)k}{2}})^{\theta}=C 2^{\frac{(n+1)k}{2p}}$.

\begin{lem}[\cite{BL}, Section 3.13, Exercise 5(a)]\label{biint}
Let $A_0, A_1, B_0, B_1, C_0, C_1$ be Banach spaces. Suppose that a bilinear operator $T$ satisfies
\[
T : A_0 \times B_0 \rightarrow C_0 \quad \text{and} \quad T : A_1 \times B_1 \rightarrow C_1.
\]
Then, for any $0 < \theta < 1$, $1 \le q \le \infty$, and $1/q = 1/p_1 + 1/p_2 - 1$, we have
\[
T : (A_0, A_1)_{\theta, p_1} \times (B_0, B_1)_{\theta, p_2} \longrightarrow (C_0, C_1)_{\theta, q}.
\]
\end{lem}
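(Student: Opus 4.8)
I would first recall the $J$-method description of the real interpolation spaces and then combine it with Young's convolution inequality on $\ell^q(\mathbb Z)$. Write $\bar A=(A_0,A_1)$, $\bar B=(B_0,B_1)$, $\bar C=(C_0,C_1)$, let $M_0,M_1$ denote the norms of $T:A_0\times B_0\to C_0$ and $T:A_1\times B_1\to C_1$, and set $M=\max(M_0,M_1)$. Recall the Peetre functional $J(t,x;\bar A)=\max(\|x\|_{A_0},t\|x\|_{A_1})$ for $x\in A_0\cap A_1$, together with the equivalence theorem between the $K$- and $J$-methods: for $p_1<\infty$ every $a\in(A_0,A_1)_{\theta,p_1}$ admits a representation $a=\sum_{\mu\in\mathbb Z}u_\mu$, convergent in $A_0+A_1$, with $u_\mu\in A_0\cap A_1$ and
\[
\Big(\sum_{\mu\in\mathbb Z}\big(2^{-\mu\theta}J(2^\mu,u_\mu;\bar A)\big)^{p_1}\Big)^{1/p_1}\le c\,\|a\|_{(A_0,A_1)_{\theta,p_1}},
\]
and likewise $b=\sum_{\nu\in\mathbb Z}v_\nu$ with $v_\nu\in B_0\cap B_1$ for $b\in(B_0,B_1)_{\theta,p_2}$; the degenerate case $p_i=\infty$ is handled by a standard approximation argument.

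The core of the argument is a pointwise bound for $T(u_\mu,v_\nu)$. Since $u_\mu\in A_0\cap A_1$ and $v_\nu\in B_0\cap B_1$, bilinearity and the two hypotheses give $T(u_\mu,v_\nu)\in C_0\cap C_1$ with $\|T(u_\mu,v_\nu)\|_{C_i}\le M_i\|u_\mu\|_{A_i}\|v_\nu\|_{B_i}$ for $i=0,1$. Evaluating the $J$-functional at the scale $t=2^{\mu+\nu}$ and using $\|u_\mu\|_{A_0}\le J(2^\mu,u_\mu;\bar A)$ and $2^\mu\|u_\mu\|_{A_1}\le J(2^\mu,u_\mu;\bar A)$ (and the analogues for $v_\nu$) yields
\[
J\big(2^{\mu+\nu},T(u_\mu,v_\nu);\bar C\big)\le M\,J(2^\mu,u_\mu;\bar A)\,J(2^\nu,v_\nu;\bar B).
\]
I would then group the double series by $\lambda=\mu+\nu$, setting $w_\lambda=\sum_{\mu+\nu=\lambda}T(u_\mu,v_\nu)$ so that $T(a,b)=\sum_\lambda w_\lambda$; subadditivity of $J$ in its second argument gives $2^{-\lambda\theta}J(2^\lambda,w_\lambda;\bar C)\le M(\alpha*\beta)_\lambda$, where $\alpha_\mu=2^{-\mu\theta}J(2^\mu,u_\mu;\bar A)$, $\beta_\nu=2^{-\nu\theta}J(2^\nu,v_\nu;\bar B)$, and $*$ denotes convolution on $\mathbb Z$. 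Now Young's inequality on $\mathbb Z$ under the exponent relation $1/q=1/p_1+1/p_2-1$ gives $\|\alpha*\beta\|_{\ell^q}\le\|\alpha\|_{\ell^{p_1}}\|\beta\|_{\ell^{p_2}}$ (in the paper's case $p_1=p_2=2$, $q=\infty$, this is just Cauchy–Schwarz), so
\[
\Big(\sum_\lambda\big(2^{-\lambda\theta}J(2^\lambda,w_\lambda;\bar C)\big)^q\Big)^{1/q}\le M\,\|\alpha\|_{\ell^{p_1}}\|\beta\|_{\ell^{p_2}}\le cM\,\|a\|_{(A_0,A_1)_{\theta,p_1}}\|b\|_{(B_0,B_1)_{\theta,p_2}}.
\]
By the $J$-method half of the equivalence theorem, finiteness of the left side shows $\sum_\lambda w_\lambda$ converges in $C_0+C_1$, lies in $(C_0,C_1)_{\theta,q}$, and has norm $\lesssim M\|a\|_{(A_0,A_1)_{\theta,p_1}}\|b\|_{(B_0,B_1)_{\theta,p_2}}$, which is the claim.

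The main point requiring care — and essentially the only obstacle, since the estimates above are routine once the $J$-method is in place — is justifying the identity $T(a,b)=\sum_{\mu,\nu}T(u_\mu,v_\nu)$. Because $T$ is a priori specified only on $A_i\times B_i$, one must verify that the double series converges unconditionally in $C_0+C_1$ (which follows from the same convolution bound applied with $\ell^1$ on finite truncations) and that its sum is independent of the chosen $J$-representations of $a$ and $b$; here one uses the density of $A_0\cap A_1$ in $(A_0,A_1)_{\theta,p_1}$ when $p_1<\infty$, and a weak-$*$ approximation when $p_1=\infty$. This is also the step in which the hypothesis $1/q=1/p_1+1/p_2-1$ (equivalently $1/p_1+1/p_2\ge1$) is genuinely needed, since it is exactly the admissibility condition for Young's inequality.
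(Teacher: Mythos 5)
The paper gives no proof of this lemma: it is quoted from Bergh--L\"ofstr\"om (Section 3.13, Exercise 5(a)), so there is no internal argument to compare with. Your proof is the classical Lions--Peetre argument that this exercise intends: discrete $J$-method representations, the product bound $J(2^{\mu+\nu},T(u_\mu,v_\nu);\bar C)\le M\,J(2^\mu,u_\mu;\bar A)\,J(2^\nu,v_\nu;\bar B)$, regrouping along $\mu+\nu=\lambda$, and Young's convolution inequality on $\mathbb{Z}$ under $1/q=1/p_1+1/p_2-1$ (which also guarantees the absolute convergence of each inner sum, hence $w_\lambda\in C_0\cap C_1$). The main estimates are correct, and you rightly single out the identification $T(a,b)=\sum_{\mu,\nu}T(u_\mu,v_\nu)$ as the only delicate point.

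Two remarks. First, as written your argument yields the operator bound $\max(M_0,M_1)$, whereas the paper invokes the lemma with the multiplicative bound $C^{1-\theta}\bigl(C2^{(n+1)k/2}\bigr)^{\theta}$; that sharper form is what produces the factor $2^{(n+1)k/(2p)}$, and $\max(M_0,M_1)$ alone would not suffice downstream. It follows from your proof by the standard rescaling: evaluate the $J$-functional at $t=2^{\mu+\nu}M_0/M_1$ (equivalently, renormalize the norms of $A_1,B_1,C_1$ by $M_0/M_1$), which only shifts the dyadic grid by a fixed factor and gives $M_0^{1-\theta}M_1^{\theta}$. Second, your aside on $p_i=\infty$ is slightly off: for $0<\theta<1$ the equivalence theorem provides $J$-representations for all $1\le p_i\le\infty$, since $K(t,a)\lesssim t^{\theta}$ makes the fundamental lemma applicable, so no approximation is needed at that stage; where $p_i=\infty$ genuinely hurts is precisely the density argument you invoke to justify $T(a,b)=\sum_{\mu,\nu}T(u_\mu,v_\nu)$, because $A_0\cap A_1$ need not be dense in $(A_0,A_1)_{\theta,\infty}$ and a ``weak-$*$'' approximation is not available in general. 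A cleaner justification is to use the canonical $J$-representation, whose tails converge absolutely in $A_0$ (as $\mu\to-\infty$) and in $A_1$ (as $\mu\to+\infty$), verify the identity first for $a\in A_0\cap A_1$, $b\in B_0\cap B_1$, and then extend by density when $p_1,p_2<\infty$; in the paper's application $p_1=p_2=2$, $q=\infty$, and the bilinear form is given by explicit absolutely convergent integrals, so none of this causes difficulty.
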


Finally, we apply the following real interpolation space identities (see Theorems 5.4.1 and 5.6.1 in \cite{BL}):

\begin{lem}\label{rint}
Let $0 < \theta <1$. Then we have
\begin{equation*}
(L^{2} (w_0), L^{2} (w_1))_{\theta,\,2} = L^2 (w), \quad \text{with } w = w_{0}^{1-\theta} w_{1}^{\theta},
\end{equation*}
and for $1 \leq q_0, q_1, q \leq \infty$ and $s_0 \ne s_1$,
\begin{equation*}
(\ell_{q_0}^{s_0}, \ell_{q_1}^{s_1})_{\theta, q} = \ell_{q}^{s}, \quad \text{where } s = (1-\theta) s_0 + \theta s_1.
\end{equation*}
\end{lem}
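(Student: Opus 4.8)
The plan is to prove both identities by the $K$-method of real interpolation; they are in fact the classical statements \cite[Theorems~5.4.1 and~5.6.1]{BL}, so it suffices to recall the underlying $K$-functional computations.

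For the first identity I would use that on any Banach couple the $K$-functional is equivalent, up to the constant $\sqrt2$, to the quadratic functional $K_2(t,f)^2=\inf_{f=f_0+f_1}(\|f_0\|_{L^2(w_0)}^2+t^2\|f_1\|_{L^2(w_1)}^2)$, and that for the couple $(L^2(w_0),L^2(w_1))$ this infimum can be carried out pointwise: minimizing $w_0(x)|f_0(x)|^2+t^2w_1(x)|f_1(x)|^2$ subject to $f_0(x)+f_1(x)=f(x)$ gives the value $\frac{t^2w_0(x)w_1(x)}{w_0(x)+t^2w_1(x)}|f(x)|^2$, hence $K_2(t,f)^2=\int_{\mathbb{R}^n}\frac{t^2w_0w_1}{w_0+t^2w_1}|f|^2\,dx$. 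Inserting this into $\|f\|_{(L^2(w_0),L^2(w_1))_{\theta,2}}^2\sim\int_0^\infty t^{-2\theta}K(t,f)^2\,\frac{dt}{t}$, interchanging the integrals by Tonelli, and evaluating the scalar integral $\int_0^\infty t^{1-2\theta}\frac{w_0w_1}{w_0+t^2w_1}\,dt$ by the substitution $u=t^2w_1/w_0$ — which reduces it to a Beta-type integral $\int_0^\infty u^{-\theta}(1+u)^{-1}\,du$, finite precisely because $0<\theta<1$, multiplied by $w_0^{1-\theta}w_1^{\theta}$ — yields $\|f\|_{(L^2(w_0),L^2(w_1))_{\theta,2}}^2\sim\int_{\mathbb{R}^n}|f|^2\,w_0^{1-\theta}w_1^{\theta}\,dx$, which is the assertion with $w=w_0^{1-\theta}w_1^{\theta}$.

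For the second identity I would again compute the $K$-functional, now of the couple $(\ell_{q_0}^{s_0},\ell_{q_1}^{s_1})$ viewed as a weighted $\ell_q$-sum of the one-dimensional couples $(\mathbb{C},2^{s_0k}|\cdot|)$ and $(\mathbb{C},2^{s_1k}|\cdot|)$. The scalar $K$-functional of the $k$-th component is $\min(2^{s_0k},t\,2^{s_1k})|a_k|$, and when the fine exponents agree, $q_0=q_1=q$, the $K$-functional of the sum is comparable to the $\ell_q$-norm of the componentwise $K$-functionals, so $K(t,a)\sim\|\{\min(2^{s_0k},t\,2^{s_1k})|a_k|\}_k\|_{\ell_q}$. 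Substituting into $\|a\|_{(\ell_q^{s_0},\ell_q^{s_1})_{\theta,q}}^q\sim\int_0^\infty t^{-\theta q}K(t,a)^q\,\frac{dt}{t}$, swapping sum and integral, and evaluating $\int_0^\infty t^{-\theta q}\min(2^{s_0k},t\,2^{s_1k})^q\,\frac{dt}{t}$ by splitting the $t$-integral at the crossover time $t=2^{(s_0-s_1)k}$ gives $c_{\theta,q}\,2^{kq((1-\theta)s_0+\theta s_1)}$, whence $\|a\|_{(\ell_q^{s_0},\ell_q^{s_1})_{\theta,q}}\sim\|a\|_{\ell_q^{s}}$ with $s=(1-\theta)s_0+\theta s_1$. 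For $q=\infty$ — the only case actually invoked, in Section~\ref{sec4} — suprema replace the integrals and the computation is identical, $\sup_{t>0}t^{-\theta}\min(2^{s_0k},t\,2^{s_1k})$ being attained at the crossover time and equal to $2^{sk}$.

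I expect the delicate point to be the case of unequal fine exponents $q_0\ne q_1$, which is exactly where the hypothesis $s_0\ne s_1$ is used: there the $K$-functional of the sum no longer factors so cleanly, and I would reduce to the equal-exponent case by inserting the continuous inclusions between weighted $\ell_q$ spaces that become available once the weights are separated (an $\ell_1$ space at a slightly worse exponent embedding into $\ell_{q_i}^{s_i}$, which in turn embeds into an $\ell_\infty$ space at a slightly worse exponent), invoking the monotonicity of $(\cdot,\cdot)_{\theta,q}$, and letting the perturbations shrink — with $s_0\ne s_1$ the perturbed exponents remain on the correct side of $s$, so the space is pinned down as $\ell_q^{s}$. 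Since all of this is carried out in full generality in \cite{BL}, in the end I would simply cite those two theorems; the two explicit integral evaluations above are routine.
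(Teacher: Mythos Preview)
Your proposal is correct, and it aligns with the paper's treatment: the paper does not prove this lemma at all but simply quotes it as Theorems~5.4.1 and~5.6.1 of \cite{BL}. Your $K$-functional sketch is the standard argument behind those theorems, so there is nothing to add.
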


\noindent
Applying Lemma~\ref{rint} with $\theta = 1/p$, we obtain
\[
B : L^2(|(x,t)|^{\alpha}) \times L^2(|(x,t)|^{\alpha}) \longrightarrow \ell_{\infty}^{s_2},
\]
with operator norm bounded by $C 2^{\frac{(n+1)k}{2p}}$, provided that $1 < p < \infty$, $\alpha < \frac{n+1}{p}$, and $(n+1)/2 \ne \alpha p$. Here,
\[
s_2 = (1 - \frac{1}{p}) s_0 + \frac{1}{p} s_1 = -(1 + \frac{n+1}{2p} - \alpha).
\]
This is equivalent to the bound
\begin{equation*}
| B_j(F,G) | \lesssim  2^{\frac{(n+1)k}{2p}} \cdot 2^{j(1 + \frac{n+1}{2p} - \alpha)} \, \| F \|_{L^2 (|(x,t)|^{\alpha})} \| G \|_{L^2 (|(x,t)|^{\alpha})}.
\end{equation*}
Now summing over all $j \ge 0$ in the decomposition \eqref{jsum}, the bilinear form estimate \eqref{biest} follows provided
\[
1 + \frac{n+1}{2p} - \alpha < 0,
\]
which ensures convergence of the geometric series.

In summary, the conditions required to establish \eqref{LE2} are
\[
1 < p < \infty, \quad \alpha < \frac{n+1}{p}, \quad \frac{n+1}2 \neq \alpha p, 
\quad \text{and} \quad 1 + \frac{n+1}{2p} - \alpha < 0.
\]
These reduce to the simpler form
\begin{equation*}
1 + \frac{n+1}{2p} < \alpha < \frac{n+1}{p} 
\quad \text{and} \quad 
1 < p < \frac{n+1}{2}.
\end{equation*}
Therefore, \eqref{LE2} holds under exactly the same conditions as those stated in Lemma~\ref{FLE}.

\section{Bilinear estimates: Proof of Lemma \ref{local}}\label{sec5}
Now we proceed to prove the estimates \eqref{L2} and \eqref{wL2} stated in Lemma~\ref{local}.
While the former is relatively straightforward, the proof of the latter requires additional localization in space, as well as a careful analysis of certain oscillatory integrals under this localization (see Lemma~\ref{osci}).

\subsection*{Proof of \eqref{L2}}
For fixed \( j \ge 0 \), we first decompose \( \mathbb{R} \) into intervals of length \( 2^j \), so that
\begin{equation}\label{decomt}
B_j(F,G) = \sum_{\ell \in \mathbb{Z}} \int_{2^j \ell}^{2^j (\ell+1)} \int_{t - I_j} \Big\langle e^{i(t-s) \sqrt{-\Delta}} P_k^2 F(\cdot, s), G(x,t) \Big\rangle_{L_x^2} \, ds \, dt,
\end{equation}
where \( t - I_j = (t - 2^j, t - 2^{j-1}] \) for \( j \ge 1 \) and \( (t - 1, t] \) for \( j = 0 \).

Applying H\"older's inequality and Plancherel's theorem in the \( x \)-variable, we obtain
\begin{align}\label{piece}
\int_{2^j \ell}^{2^j (\ell+1)} & \int_{t - I_j} \Big| \Big\langle  e^{i(t-s) \sqrt{-\Delta}} P_k^2 F(\cdot, s), G(x,t) \Big\rangle_{L_x^2} \Big| \, ds \, dt \nonumber\\
&\le \int_{2^j \ell}^{2^j (\ell+1)} \int_{t - I_j} \big\| e^{i(t-s) \sqrt{-\Delta}} P_k^2 F(\cdot, s) \big\|_{L_x^2} \| G(\cdot, t) \|_{L_x^2} \, ds \, dt \nonumber\\
&\lesssim \int_{2^j \ell}^{2^j (\ell+1)} \int_{t - I_j} \| F(\cdot, s) \|_{L_x^2} \| G(\cdot, t) \|_{L_x^2} \, ds \, dt \nonumber\\
&\le \int_{2^j \ell}^{2^j (\ell+1)} \int_{2^j (\ell - 1)}^{2^j (\ell + \frac{1}{2})} \| F(\cdot, s) \|_{L_x^2} \| G(\cdot, t) \|_{L_x^2} \, ds \, dt.
\end{align}

Using H\"older’s inequality once again and then applying the Cauchy–Schwarz inequality in \( \ell \), we arrive at
\begin{align}\label{lsum}
\sum_{\ell \in \mathbb{Z}} \int_{2^j \ell}^{2^j (\ell+1)} & \int_{2^j (\ell - 1)}^{2^j (\ell + \frac{1}{2})} \| F(\cdot, s) \|_{L_x^2} \| G(\cdot, t) \|_{L_x^2} \, ds \, dt \nonumber\\
&\le C 2^j \sum_{\ell \in \mathbb{Z}} \Big( \int_{2^j (\ell - 1)}^{2^j (\ell + \frac{1}{2})} \| F(\cdot, s) \|_{L_x^2}^2 \, ds \Big)^{1/2}
\Big( \int_{2^j \ell}^{2^j (\ell+1)} \| G(\cdot, t) \|_{L_x^2}^2 \, dt \Big)^{1/2} \nonumber\\
&\le C 2^j \| F \|_{L^2} \| G \|_{L^2}.
\end{align}

Combining \eqref{decomt}, \eqref{piece}, and \eqref{lsum} gives the desired estimate \eqref{L2}.

\subsection*{Proof of \eqref{wL2}}
For fixed \( j \ge 0 \), we further decompose \( \mathbb{R}^n \) into cubes of side length \( 2^j \) and write
\begin{equation}
F(y,s) = \sum_{\rho \in \mathbb{Z}^n} F_{\rho} (y,s) \quad \text{and} \quad G(x,t) = \sum_{\rho \in \mathbb{Z}^n} G_{\rho} (x,t), \nonumber
\end{equation}
where
\[
F_{\rho} (y,s) = \chi_{2^j\rho +[0, 2^j )^n} (y) F(y,s), \quad
G_{\rho} (x,t) = \chi_{2^j \rho +[0, 2^j )^n} (x) G(x,t).
\]
Inserting this decomposition into \eqref{decomt}, we are reduced to showing
\begin{align}\label{reddu}
\nonumber\sum_{\ell \in\mathbb{Z}}\sum_{\rho_1,\rho_2\in\mathbb{Z}^n} & \int_{2^j\ell}^{2^j(\ell+1)}\int_{t-I_j} \Big| \Big\langle e^{i(t-s)\sqrt{-\Delta}}P_k^2 F_{\rho_1}(\cdot,s),G_{\rho_2}(x,t)\Big\rangle_{L_x^2}\Big|dsdt \notag\\
&\lesssim 2^{\frac{(n+1)k}2}  2^{j(\frac{n+3}{2} - \alpha p)}\|F\|_{L_{y,s}^2(|(y,s)|^{\alpha p})}\|G\|_{L_{x,t}^2({|(x,t)|^{\alpha p}})}
\end{align}
for \( \alpha < (n+1)/p \).

To prove this, we first express
\begin{align}\label{PH}
e^{i(t-s)\sqrt{-\Delta}}P_k^2 F_{\rho_1}(\cdot,s)
&= \int_{ \mathbb{R}^n}
\left( \int_{\mathbb{R}^n} e^{i(x-y)\cdot\xi + i(t-s)|\xi|} \, \phi(2^{-k}|\xi|)^2 \, d\xi \right)
F_{\rho_1}(y,s) \, dy \nonumber \\
&:= \int_{ \mathbb{R}^n} I_k(x - y, t - s) \, F_{\rho_1}(y,s) \, dy,
\end{align}
where we define the oscillatory integral kernel
\[
I_k(x - y, t - s) := \int_{\mathbb{R}^n} e^{i(x - y)\cdot\xi + i(t - s)|\xi|} \, \phi(2^{-k}|\xi|)^2 \, d\xi.
\]

We will now estimate this kernel and proceed with the analysis based on this representation.

\begin{lem}\label{osci}
Let \( x \in 2^j \rho_1 + [0,2^j)^n \), \( y \in 2^j \rho_2 + [0,2^j)^n \), and \( s \in t - I_j \). Then,
\begin{equation}\label{nonSP}
\big| I_k(x - y, t - s) \big| \lesssim 2^{nk} \big( 2^{k+j} |\rho_1 - \rho_2| \big)^{-10n}
\end{equation}
when \( |\rho_1 - \rho_2| \ge 4\sqrt{n} \), and
\begin{equation}\label{SP}
\big| I_k(x - y, t - s) \big| \lesssim 2^{\frac{(n+1)k}{2}} 2^{-\frac{(n-1)}{2}j}
\end{equation}
when \( |\rho_1 - \rho_2| < 4\sqrt{n} \).
\end{lem}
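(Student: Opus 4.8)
The plan is to rescale in frequency and then split according to whether the two cubes are far apart ($|\rho_1-\rho_2|\ge 4\sqrt n$) or close ($|\rho_1-\rho_2|<4\sqrt n$). Writing $z=x-y$, $\tau=t-s$ and substituting $\xi=2^k\eta$ gives
\[
I_k(z,\tau)=2^{nk}\int_{\mathbb{R}^n}e^{i2^k(z\cdot\eta+\tau|\eta|)}\,\phi(|\eta|)^2\,d\eta ,
\]
where $\phi(|\eta|)^2$ is a fixed smooth bump supported in $\{1/2<|\eta|<2\}$ with seminorms independent of $k$. From the hypotheses $x-y\in 2^j(\rho_1-\rho_2)+(-2^j,2^j)^n$ and $s\in t-I_j$, so $|\tau|\le 2^j$ (with $|\tau|\sim 2^j$ when $j\ge1$), while $|z|\sim 2^j|\rho_1-\rho_2|$ in the far case and $|z|\lesssim 2^j$ in the close case; in particular $|z|\lesssim|\tau|$ in the close case once $j\ge1$. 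Finally, whenever $2^k$ is small enough that the asserted bound is already $\gtrsim 2^{nk}$, the trivial estimate $|I_k|\le 2^{nk}\int\phi(|\eta|)^2\,d\eta\lesssim 2^{nk}$ settles it; so I may assume $2^{k+j}|\rho_1-\rho_2|\gtrsim1$ in \eqref{nonSP} and $2^{k+j}\gtrsim1$ in \eqref{SP}, i.e.\ that the oscillation is genuinely large.

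For \eqref{nonSP} the mechanism is non-stationary phase. Since $|\rho_1-\rho_2|\ge 4\sqrt n$ forces $|z|\sim 2^j|\rho_1-\rho_2|$ and $|z|\ge 3|\tau|$, and since $\nabla_\eta\Psi=2^k(z+\tau\eta/|\eta|)$ where $\Psi(\eta):=2^k(z\cdot\eta+\tau|\eta|)$, we get $|\nabla_\eta\Psi|\ge 2^k(|z|-|\tau|)\gtrsim 2^{k+j}|\rho_1-\rho_2|$ on the annulus, while $|\nabla_\eta^m\Psi|\lesssim 2^k|\tau|\lesssim 2^{k+j}$ for $m\ge2$ --- smaller than $|\nabla_\eta\Psi|$ by the factor $|\rho_1-\rho_2|^{-1}\le(4\sqrt n)^{-1}$. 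Hence the standard integration by parts with $L=(i|\nabla_\eta\Psi|^2)^{-1}\nabla_\eta\Psi\cdot\nabla_\eta$ (for which $Le^{i\Psi}=e^{i\Psi}$) keeps the amplitude seminorms bounded at each step and, iterated $10n$ times, yields the gain $(2^{k+j}|\rho_1-\rho_2|)^{-10n}$, which is \eqref{nonSP}.

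For \eqref{SP} (take $j\ge1$, so $|\tau|\sim 2^j\gtrsim|z|$) I would pass to polar coordinates $\eta=\sigma\omega$ and peel off the angular integral:
\[
I_k(z,\tau)=2^{nk}\int_{1/2}^{2}\phi(\sigma)^2\sigma^{n-1}e^{i2^k\tau\sigma}\,\widehat{d\sigma}(2^k\sigma z)\,d\sigma ,
\]
where $d\sigma$ is the surface measure on $S^{n-1}$. The curvature of the sphere enters through the classical bounds $|\nabla^m\widehat{d\sigma}(\zeta)|\lesssim(1+|\zeta|)^{-(n-1)/2}$ for all $m\ge0$. Because $e^{i2^k\tau\sigma}$ oscillates at frequency $2^k|\tau|\sim 2^{k+j}$ whereas each $\sigma$-derivative of $\widehat{d\sigma}(2^k\sigma z)$ costs only $2^k|z|\lesssim 2^{k+j}$, integrating by parts $N$ times in $\sigma$ (no boundary terms, $\phi$ being compactly supported) yields
\[
|I_k(z,\tau)|\lesssim 2^{nk}\,(2^k|\tau|)^{-N}\,(1+2^k|z|)^{N-\frac{n-1}{2}} .
\]
Choosing $N>\frac{n-1}{2}$ and using $2^k|z|\lesssim 2^k|\tau|\sim 2^{k+j}\gtrsim1$, this is $\lesssim 2^{nk}(2^{k+j})^{-\frac{n-1}{2}}=2^{\frac{(n+1)k}{2}}2^{-\frac{(n-1)j}{2}}$, which is \eqref{SP}. (Equivalently, for $j\ge1$ the estimate \eqref{SP} is just the frequency-localized dispersive bound $\|e^{it\sqrt{-\Delta}}P_k\|_{L^1\to L^\infty}\lesssim 2^{nk}(1+2^k|t|)^{-\frac{n-1}{2}}$ evaluated at $|t|=|t-s|\sim 2^j$.) The index $j=0$, where $|t-s|$ is no longer bounded below, is not a pointwise inequality of this strength, and I would handle it separately: the polar-coordinate estimate above still applies when $|t-s|\sim1$, and the trivial bound covers $2^k\lesssim1$.

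The step I expect to be the main obstacle is the oscillatory analysis behind \eqref{SP}: one must extract exactly $\tfrac{n-1}{2}$ powers of decay although the phase $z\cdot\eta+\tau|\eta|$ is doubly degenerate --- flat in the radial direction, and with curved directions whose curvature $\sim|\tau|$ collapses as $|\tau|\to0$ --- so the gain cannot be read off a nondegenerate Hessian and must be borrowed from the curvature of the sphere, with the radial integration by parts serving to cancel the residual loss precisely in the regime $|x-y|\ll|t-s|$. Making the exponents of $2^k$, $2^j$ and $|\rho_1-\rho_2|$ line up with those demanded by \eqref{nonSP}, \eqref{SP} (and, in the application, by \eqref{wL2}) is the bookkeeping that has to be done with care.
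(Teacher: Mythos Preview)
Your argument is correct, and for \eqref{nonSP} it follows the same non-stationary phase route as the paper, with the integration-by-parts operator $L$ spelled out more explicitly. For \eqref{SP} the routes differ: the paper simply invokes Littman's stationary phase lemma, observing that the Hessian of $|\xi|$ has rank $n-1$ on the support of the amplitude, to get $|I_k|\lesssim 2^{nk}(1+2^k|(x-y,t-s)|)^{-(n-1)/2}$ and then uses $|t-s|\ge 2^{j-1}$. Your polar-coordinate argument, isolating $\widehat{d\sigma}$ and then integrating by parts in the radial variable, is instead the standard derivation of the frequency-localized wave dispersive estimate specialized to this kernel; it is more self-contained and makes the balance between $2^k|z|$ and $2^k|\tau|$ explicit, at the cost of being longer than the paper's one-line citation. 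You also correctly flag the $j=0$ case: the paper's final inequality $(1+2^k|(x-y,t-s)|)^{-(n-1)/2}\le C\,2^{-(n-1)(k+j)/2}$ tacitly relies on $|t-s|\gtrsim 2^j$, which is only guaranteed for $j\ge1$.
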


\begin{proof}[Proof of Lemma \ref{osci}]
To prove \eqref{nonSP}, we change variables \( \xi \mapsto 2^{k} \xi \) and define \( \psi(\xi) := (x - y) \cdot \xi + (t - s)|\xi| \), \( \varphi(\xi) := \phi(|\xi|)^2 \). Then,
\[
I_k(x - y, t - s) = 2^{nk} \int_{\mathbb{R}^n} e^{i\psi(2^k \xi)} \varphi(\xi) \, d\xi.
\]
If \( |x - y| \ge 2|t - s| \), repeated integration by parts yields
\begin{equation}\label{er}
\bigg| \int_{\mathbb{R}^n} e^{i\psi(2^k \xi)} \varphi(\xi) \, d\xi \bigg| \lesssim (2^k |x - y|)^{-10n}.
\end{equation}
For \( x \in 2^j \rho_1 + [0, 2^j)^n \), \( y \in 2^j \rho_2 + [0, 2^j)^n \), and \( s \in t - I_j \), 
we also observe that
\( |t - s| \le 2^j \) and
\begin{equation}\label{er2}
|2^{-j}x - 2^{-j}y| \ge \frac{1}{2} |\rho_1 - \rho_2| \quad \text{if} \quad |\rho_1 - \rho_2| > \sqrt{n}.
\end{equation}
Hence,
if \( |\rho_1 - \rho_2| \ge 4\sqrt{n} \), then \( |x - y| \ge 2^{j-1}|\rho_1 - \rho_2| \ge 2\sqrt{n}|t - s| \), so \eqref{nonSP} follows from \eqref{er} and \eqref{er2}.

For the second estimate \eqref{SP}, we apply the stationary phase lemma (Lemma \ref{stationary_phase}) due to Littman \cite{L} (see also \cite{St}, Chap. VIII):

\begin{lem}\label{stationary_phase}
Let \( H\psi \) be the Hessian matrix of \( \psi \), given by \( (\partial^2 \psi / \partial \xi_i \partial \xi_j) \). Suppose \( \varphi \) is a compactly supported smooth function on \( \mathbb{R}^n \), and \( \psi \) is a smooth function such that \( \text{rank}(H\psi) \ge k \) on the support of \( \varphi \). Then,
\[
\left| \int_{\mathbb{R}^n} e^{i(x \cdot \xi + t\psi(\xi))} \varphi(\xi) \, d\xi \right| \le C (1 + |(x, t)|)^{-\frac{k}{2}}.
\]
\end{lem}

\noindent In our case,
\begin{align*}
\int_{\mathbb{R}^n} e^{i\psi(2^k \xi)} \varphi(\xi) \, d\xi
&= \int_{\mathbb{R}^n} e^{i 2^k (x - y) \cdot \xi + i 2^k (t - s)|\xi|} \varphi(\xi) \, d\xi \\
&\le C (1 + 2^k |(x - y, t - s)|)^{-\frac{n - 1}{2}} \\
&\le C 2^{-\frac{n - 1}{2}(k + j)},
\end{align*}
since the Hessian of \( |\xi| \) has rank \( n - 1 \) for \( \xi \in \{ \xi \in \mathbb{R}^n : |\xi| \sim 1 \} \). This proves \eqref{SP}.
\end{proof}

Now we return to the proof of \eqref{reddu} with what we just obtained.
Let us first denote
\begin{align*}
C_{\rho_1, \rho_2}(j)=
\begin{cases}
(2^j|\rho_1 -\rho_2|)^{-10n}, & \text{if } |\rho_1-\rho_2|\geq 4\sqrt{n},\\
2^{-\frac{(n-1)j}{2}}, & \text{if } |\rho_1 - \rho_2| < 4\sqrt{n},
\end{cases}
\end{align*}
and set \( C_{k,\sigma}=2^{\frac{(n+1)k}{2}} \).
Using \eqref{PH}, \eqref{nonSP}, and \eqref{SP}, we then have
\begin{align}\label{AfterPH}
&\int_{2^j\ell}^{2^j(\ell+1)} \int_{t-I_j} \Big| \big\langle e^{i(t-s)\sqrt{-\Delta}}P_k^2 F_{\rho_1}(\cdot,s), G_{\rho_2}(x,t) \big\rangle_{L_x^2} \Big|\, ds dt \notag \\
&\quad \lesssim C_{k,\sigma} C_{\rho_1,\rho_2}(j) \int_{2^j\ell}^{2^j(\ell+1)} \int_{t-I_j} \int_{\mathbb{R}^n} \int_{\mathbb{R}^n} |F_{\rho_1}(y,s) G_{\rho_2}(x,t)|\, dy dx ds dt \notag \\
&\quad \lesssim C_{k,\sigma} C_{\rho_1,\rho_2}(j) \int_{2^j\ell}^{2^j(\ell+1)} \int_{2^j(\ell-1)}^{2^j(\ell+\frac{1}{2})} \|F_{\rho_1}(\cdot,s)\|_{L_y^1} \|G_{\rho_2}(\cdot,t)\|_{L_x^1} ds dt.
\end{align}

Now observe that
\begin{align*}
& \int_{2^j(\ell-1)}^{2^j(\ell+\frac{1}{2})} \|F_{\rho_1}(\cdot,s)\|_{L_y^1}ds\\
&\quad = \int_{2^j(\ell-1)}^{2^j(\ell+\frac{1}{2})} \int_{y\in 2^j\rho_1+[0,2^j)^n} |F_{\rho_1}(y,s)|\cdot |(y,s)|^{\alpha p/2}\cdot|(y,s)|^{-\alpha p/2} dyds\\
& \quad \le \|F_{\rho_1}\chi_{[2^j(\ell-1),2^j(\ell+\frac{1}{2}))}\|_{L_{y,s}^{2}(|(y,s)|^{\alpha p})}\Big(\int_{2^j(\ell-1)}^{2^j(\ell+\frac{1}{2})}\int_{y\in 2^j\rho_1+[0,2^j)^n}|(y,s)|^{-\alpha p}\,dyds\Big)^{\frac{1}{2}}\\
&\quad\lesssim 2^{j(n+1-\alpha p)/2}\|F_{\rho_1}\chi_{[2^j(\ell-1),2^j(\ell+\frac{1}{2}))}\|_{L_{y,s}^{2}(|(y,s)|^{\alpha p})}
\end{align*}
if \( \alpha < (n+1)/p \). Similarly,
\[
\int_{2^j\ell}^{2^j(\ell+1)} \|G_{\rho_2}(\cdot,t)\|_{L_x^1} dt \lesssim 2^{\frac{j(n+1-\alpha p)}{2}} \|G_{\rho_2} \chi_{[2^j\ell,2^j(\ell+1))}\|_{L_{x,t}^2(|(x,t)|^{\alpha p})}.
\]
Applying these bounds to \eqref{AfterPH} and using the Cauchy-Schwarz inequality in $\ell$, 
we get
\begin{align}\label{AfterPH2}
&\sum_{\ell\in\mathbb{Z}} \sum_{\rho_1,\rho_2\in\mathbb{Z}^n} \int_{2^j\ell}^{2^j(\ell+1)} \int_{t-I_j} \Big| \big\langle e^{i(t-s)\sqrt{-\Delta}}P_k^2 F_{\rho_1}(\cdot,s), G_{\rho_2}(x,t) \big\rangle_{L_x^2} \Big|\, ds dt \notag \\
&\quad \lesssim C_{k,\sigma} 2^{j(n+1 - \alpha p)} \sum_{\rho_1,\rho_2\in\mathbb{Z}^n} C_{\rho_1,\rho_2}(j) \|F_{\rho_1}\|_{L_{y,s}^2(|(y,s)|^{\alpha p})} \|G_{\rho_2}\|_{L_{x,t}^2(|(x,t)|^{\alpha p})}.
\end{align}

We now estimate the sum on the right-hand side of \eqref{AfterPH2}.  
When $|\rho_1 - \rho_2| < 4\sqrt{n}$, we apply the Cauchy--Schwarz inequality in $\rho_1$ to obtain
\begin{align*}
&\sum_{\substack{\rho_1,\rho_2 \\ |\rho_1-\rho_2|<4\sqrt{n}}} C_{\rho_1,\rho_2}(j) \|F_{\rho_1}\|_{L_{y,s}^2(|(y,s)|^{\alpha p})} \|G_{\rho_2}\|_{L_{x,t}^2(|(x,t)|^{\alpha p})} \\
&\quad \le 2^{-\frac{(n-1)j}{2}} \bigg( \sum_{\rho_1} \|F_{\rho_1}\|_{L_{y,s}^2(|(y,s)|^{\alpha p})}^2 \bigg)^{\frac{1}{2}} \bigg( \sum_{\rho_1} \bigg( \sum_{\substack{\rho_2 \\ |\rho_1-\rho_2|<4\sqrt{n}}} \|G_{\rho_2}\|_{L_{x,t}^2(|(x,t)|^{\alpha p})} \bigg)^2 \bigg)^{\frac{1}{2}}.
\end{align*}
Since the number of $\rho_2$ such that $|\rho_1 - \rho_2|<4\sqrt{n}$ for fixed $\rho_1$ is finite, the above is bounded by
\[
C 2^{-\frac{(n-1)j}{2}} \|F\|_{L_{y,s}^2(|(y,s)|^{\alpha p})} \|G\|_{L_{x,t}^2(|(x,t)|^{\alpha p})}.
\]
Combining this with \eqref{AfterPH2} yields the desired estimate \eqref{reddu} in this case.

Next, consider the case $|\rho_1 - \rho_2| \ge 4\sqrt{n}$. We split the summation into two parts:
\begin{align*}
\sum_{\substack{\rho_1,\rho_2 \\ |\rho_1-\rho_2 |\ge4\sqrt{n}}} C_{\rho_1,\rho_2}(j) \|F_{\rho_1}\|_{L_{y,s}^2(|(y,s)|^{\alpha p})} \|G_{\rho_2}\|_{L_{x,t}^2(|(x,t)|^{\alpha p})} 
= \sum_{\substack{|\rho_1 - \rho_2| \ge 4\sqrt{n} \\ |\rho_1 |\ge 1}} + \sum_{\substack{|\rho_1 - \rho_2| \ge 4\sqrt{n} \\ |\rho_1|< 1}}.
\end{align*}
For the first sum, we use H\"older's inequality in $\rho_2$, followed by Young's inequality:
\begin{align*}
2^{-10nj} &\Big\|\sum_{|\rho_1|\geq 1}(|\rho_1 - \rho_2|)^{-10n} \|F_{\rho_1}\|_{L_{y,s}^2(|(y,s)|^{\alpha p})}\Big\|_{l^2} \Big(\sum_{\rho_2\in\mathbb{Z}^n} \|G_{\rho_2}\|^2_{L_{x,t}^2(|(x,t)|^{\alpha p})}\Big)^{\frac{1}{2}}\notag\\
&\le 2^{-10nj}\Big(\sum_{|\rho_1|\geq 1} |\rho_1|^{-10n}\Big)\Big(\sum_{\rho_1\in\mathbb{Z}^n}\|F_{\rho_1}\|^2_{L_{y,s}^2(|(y,s)|^{\alpha p})}\Big)^{\frac{1}{2}} \|G\|_{L_{x,t}^2({|(x,t)|^{\alpha p}})}\notag\\
&\le C2^{-10nj}\|F\|_{L_{y,s}^2(|(y,s)|^{\alpha p})}\|G\|_{L_{x,t}^2({|(x,t)|^{\alpha p}})}.
\end{align*}
For the second sum, where $|\rho_1|<1$ and hence $\rho_1=0$, we have
\begin{align*}
&\sum_{\{\rho_2:|\rho_2|\geq 4\sqrt{n}\}}(2^{j}|\rho_2|)^{-10n}\|F_0\|_{L_{y,s}^2(|(y,s)|^{\alpha p})}\|G_{\rho_2}\|_{L_{x,t}^2(|(x,t)|^{\alpha p})}\notag\\
&\qquad\le 2^{-10nj}\|F_0\|_{L_{y,s}^2(|(y,s)|^{\alpha p})} \Big(\sum_{\{\rho_2:|\rho_2|\geq 1\}} |\rho_2|^{-20n}\Big)^{\frac{1}{2}}\Big(\sum_{\rho_2\in\mathbb{Z}^n}\|G_{\rho_2}\|^2_{L_{x,t}^2(|(x,t)|^{\alpha p})}\Big)^{\frac{1}{2}}\notag\\
&\qquad\le C2^{-10nj}\|F\|_{L_{y,s}^2(|(y,s)|^{\alpha p})}\|G\|_{L_{x,t}^2({|(x,t)|^{\alpha p}})}.
\end{align*}
Therefore, combining both cases, we conclude that \eqref{reddu} holds when $|\rho_1 - \rho_2| \ge 4\sqrt{n}$ as well.

\section*{Acknowledgments}
The authors thank the anonymous referees for their careful reading of the manuscript and for their insightful comments, which have helped improve the presentation of the paper.

\section*{Data Availability}
This manuscript does not report data generation or analysis.

\section*{Author Contribution}
All authors contributed equally to the preparation of this manuscript.

\end{document}